\documentclass[a4paper,12pt]{article}
\usepackage{hyperref}
\usepackage{amsmath}
\usepackage{amssymb}
\usepackage{amsthm}
\usepackage{mathrsfs}
\newtheorem{theorem}{Theorem}[section]
\newtheorem{lemma}[theorem]{Lemma}

\newtheorem{corollary}[theorem]{Corollary}

\newcommand{\EE}{\mathbb{E}}
\newcommand{\ZZ}{\mathbb{Z}}
\newcommand{\NN}{\mathbb{N}}
\newcommand{\PP}{\mathbb{P}}
\newcommand{\RR}{\mathbb{R}}
\newcommand{\T}{\mathbb{T}}
\newcommand{\A}{{\cal A}}
\newcommand{\B}{{\cal B}}
\newcommand{\F}{\mathbb{F}}

\newcommand{\hH}{{\cal H}}

\newcommand{\X}{{\cal X}}
\newcommand{\Y}{{\cal Y}}
\newcommand{\Z}{{\cal Z}}

\newcommand{\aS}{{\cal S}}
\newcommand{\tT}{{\cal T}}
\newcommand{\Os}{{\Omega}}
\newcommand{\Int}{{\rm int}}

\parindent=0cm

\begin{document}

\title {Ergodic Description of STIT Tessellations}
\author{Servet Mart{\'\i}nez \and  Werner Nagel}

\maketitle

\begin{abstract}
\noindent Let $(Y_t: t > 0)$ be the STIT tessellation process. We 
show that for all polytopes $W$ with nonempty interior 
and all $a>1$, the renormalized random  
sequence $(a^n Y_{a^n}: n\in \ZZ)$ induced in $W$, is 
a finitary factor 
of a Bernoulli shift. As a corollary we get that the renormalized 
continuous time process $(a^t Y_{a^t}: t\in \RR)$ induced in $W$ is 
a Bernoulli flow.  
\end{abstract}

\bigskip

\section{ Introduction and main results }
\label{Sec1}

\medskip

\subsection{ Introduction }
\label{Sub1.0}
Let $Y=(Y_t: t > 0)$ be the STIT tessellation process, which is a Markov
process taking values in the space of tessellations of the $\ell$-dimensional 
Euclidean space $\RR^\ell$. The process $Y$ is spatially 
stationary (that is its law is invariant under translations of the space)
and on every polytope with nonempty interior $W$ (called a window) 
the induced tessellation process, which is 
denoted $Y\wedge W=(Y_t\wedge W: t> 0)$, is a pure jump process. The 
process $Y$
was firstly constructed in \cite{nw} and in Subsection \ref{Sub1.2} we give 
a brief construction and recall some of its main properties. 

\medskip

Our results are stated in Subsection \ref{Sub1.3}. In 
Theorems \ref{station} and \ref{mix22} we show that if 
$a>1$ then the renormalized process 
$\Z=(\Z_t:=a^t Y_{a^t}: t\in \RR)$ is a stationary (in time) Markov process
and its restriction to a window 
$\Z\wedge W=(\Z_t\wedge W: t\in \RR)$ is
mixing. In Theorem \ref{bernoulli} we give an ergodic description of the 
discrete process  $\Z^d\wedge W=(\Z_n\wedge W=a^n Y_{a^n}\wedge W: n\in 
\ZZ)$ on a window $W$, where $\ZZ$ is the set of integers.
There we show that $\Z^d\wedge W$ it is a finitary factor of a 
(generalized) Bernoulli shift with null anticipating length.
We conclude in Corollaries \ref{orns} and \ref{ornsflow}
that $\Z^d\wedge W$ is isomorphic to a Bernoulli shift of infinite 
entropy and that $\Z\wedge W$ is isomorphic to a Bernoulli flow
of infinite entropy defined on a Lebesgue probability space.

\medskip

The proofs of these results are done in Section \ref{sec2}.
We use strongly the fact that we are restricting the 
renormalized process to a window, indeed our main technical result, 
Lemma \ref{fundadef}, gives the probability that in a nested 
sequence of decreasing windows the tessellation is 
reduced to the boundaries of the windows. 

\medskip

We need some background on Lebesgue probability spaces 
and on some elements on ergodic theory which are respectively 
given in Subsection \ref{sbsb1} and Section \ref{suberg}.

\subsection{ Notation and some measurability facts }
\label{Sub1.0'}

\subsubsection{ Notation and product spaces }
\label{sbsb1}

For a set $\X$ we denote by $\B(\X)$ a $\sigma-$field on $\X$ and
the couple $(\X,\B(\X))$ is called a measurable space.
If $\X'\in \B(\X)$ then we will always endow $\X'$
with the trace (or induced) $\sigma-$field 
$B(\X')=\{B\cap \X': B\in B(\X)\}$. When $\nu$ is a probability measure
on $(\X,\B(\X))$, we will denote by $(\X,\B(\X),\nu)$ the
completed probability space, where completed means that 
we have added to $\B(\X)$
all the negligible sets with respect to $\nu$. We will always
consider completed probability spaces, even if we do not explicit it.
Sometimes the completed $\sigma-$field with respect to $\nu$ 
is denoted by $\B(\X)_\nu$  
but often (as we do here) it is not written to avoid overburden 
notation.

\medskip

Let $(\X_i, \B(\X_i))$, $i\in L$, be a collection
of measurable spaces. The (Cartesian) product space 
$\prod_{i\in L}\X_i$ will be endowed 
with the product $\sigma$-field $\otimes_{i\in L}\B(\X_i)$,
which is the smallest $\sigma-$field containing the family of
cylinders. We recall that a cylinder is a set of the 
form $C=\prod_{i\in J} A_i$ with $A_i\in \B(\X_i)$ and $J$ a finite 
subset of $L$. We call $(\prod_{i\in L}\X_i, \otimes_{i\in 
L}\B(\X_i))$ the product measurable space.

\medskip

Let $(\X_i,\B(\X_i),\nu_i)$, $i\in L$, be a  
family of probability spaces. The product measure 
$\otimes_{i\in L} \nu_i$ is such that on
each cylinder $C=\prod_{i\in J} A_i$ it
takes the value $(\otimes_{i\in L} \nu_i)(C)=\prod_{i\in J}
\nu_i(A_i)$. We call $(\prod_{i\in L}\X_i, \otimes_{i\in L}\B(\X_i), 
\otimes_{i\in L} \nu_i)$ the product probability space.

\medskip

When $(\X_i,\B(\X_i))=(\X,\B(\X))$ for all $i\in L$, instead of
$\prod_{i\in L}\X_i$ and $\otimes_{i\in L}\B(\X_i)$
we simply put $\X^L$ and $\B(\X)^{\otimes L}$. And if $\nu_i=\nu$
for all $i\in L$, the product probability measure $\otimes_{i\in L} \nu_i$ 
is simply written as $\nu^{\otimes L}$. 

\medskip

Let $(\X,d)$ be a metric space. It is called a  Polish space if 
it is a complete separable metric space. For instance if $(\X,d)$ is
a compact metric space then it is a Polish space. 
Let $(\X_i,d_i)$, $i\in L$, be a family of Polish spaces.
When $L$ is countable, the product space $\X=\prod_{i\in L}\X_i$ 
can be endowed with a metric $d_\X$ such that it is a Polish space
and the topology generated by $d_\X$ is the product topology. 
It suffices to give $d_\X$ when $L=\NN$, 
being $\NN=\{1,2,..\}$ the set of positive integers.
It is easily checked that the metric
$d_\X(x,y)=\sum_{i\in \NN} 2^{-i}\min(d_i(x_i,y_i),1)$ 
for $x=(x_i: i\in \NN)$, $y=(y_i: i\in \NN)\in \X$ does the job. 

\medskip

When $\X$ is a topological space we will reserve the notation
$\B(X)$ to the Borel $\sigma-$field unless the contrary is
explicitly specified.
Let $(\X_i,\B(\X_i))$, $i\in L$, be a family of Polish spaces 
endowed with their Borel $\sigma-$fields.
Consider the product space $\X=\prod_{i\in L}\X_i$.
Then, on the space $\X$ we can consider both the Borel $\sigma-$field 
and the  product $\sigma-$field. When $L$ is countable both 
$\sigma-$fields coincide,
that is $\otimes_{i\in L}\B(\X_i)=\B(\X)$. This
is not the case when $L$ is non-countable, in this case the
$\sigma-$fields are different. In fact the singletons belong to the
Borel $\sigma-$field but not to the product $\sigma-$field. In the
case $L$ is non-countable, we will denote by
${\widehat \B}(\X)$ the product $\sigma-$field to
distinguish it from the Borel $\sigma-$field.

\medskip

Let $\X$ be a topological space. For a set $B\subseteq \X$
we denote by $\Int(B)$ its interior, by $\overline B$ its closure
and by $\partial B={\overline B}\setminus \Int(B)$ its boundary.

\subsubsection{ Measurability facts }
\label{sbsb2}

We recall that a Lebesgue probability space (or a standard probability 
space) is a probability space isomorphic to the unit interval endowed
with a probability measure which is a convex combination of the Lebesgue 
measure and a pure atomic measure ('pure atomic' means that the measure 
is concentrated on points).
%An equivalent definition is given in \cite{delar}: the complete
%probability space $(\X,\B(\X),\nu)$ is Lebesgue if
%there exists a topology $\tT^\X$ on $\X$ which is Hausdorff and 
%with countable base and such that $\B(\X)$ is the Borel $\sigma-$field 
%of $\tT^\X$ completed by $\nu$ and $\nu(B)=\sup\{\nu(K):K\subseteq B, 
%K \hbox{ a } \tT^\X-\hbox{compact set}\}$.
Equivalent definitions and 
properties on these spaces can be found in Appendix 1 in \cite{cfs},
Appendix A in \cite{orns2}, Chapter $3$ in \cite{ito} and \cite{delar}. 
In particular in Theorem $2-3$ in \cite{delar} it is
shown that if $(\X,\B(\X))$ is a Polish
space endowed with its Borel $\sigma-$field and $\nu$ 
is a probability  measure on it, then the completed probability space 
$(\X,\B(\X),\nu)$ is Lebesgue.
Hence, if  $\X'\in \B(\X)$ is a Borel set of a Polish space and 
$\nu'$ is a probability measure on $(\X',\B(\X'))$ the complete
probability space  $(\X',\B(\X'),\nu')$ 
is Lebesgue. Let $L$ be a countable set 
and let $(\X_i,\B(\X_i),\nu_i)$, $i\in L$, be a countable family of 
Lebesgue probability spaces, then the product probability space
$(\prod_{i\in L}\X_i, \otimes_{i\in L} \B(\X_i), \otimes_{i\in L} 
\nu_i)$ is also Lebesgue.

\medskip

Let us introduce the Skorohod topology. Let $\RR_+ =[0,\infty )$.
Let $(\X,d)$ be a metric space. 
We denote by $D_{\X}(\RR_+)$ the space of c\`adlag trajectories 
taking values in $\X$ with time in $\RR_+$. We 
recall that c\`adlag means 
that the trajectories are right continuous and have left limits.
The space $D_{\X}(\RR_+)$ is endowed with the Skorohod topology (see
\cite{ek} chapter $3$), which is metrizable 
(see Corollary 5.5 in Chapter $3$ in \cite{ek}).
Let $d_{\rm{Sk}}^{\X}$ be a metric generating the Skorohod topology. 
When $(\X, d)$ is a separable space we get that 
$(D_{\X}(\RR_+), d_{\rm{Sk}}^{\X})$ is also a separable space 
(see Theorem $5.6$ in Chapter $3$ in \cite{ek}). 
We denote by $\B(D_{\X})$ the Borel $\sigma-$field associated to
$(D_{\X}(\RR_+), d_{\rm{Sk}}^{\X})$. 
From Proposition $7.1$ in \cite{ek} we get that the class of
cylinders in $D_{\X}(\RR_+)$ is a semi-algebra generating 
$\B(D_{\X})$. We will also need
the following straightforward extension to processes with time in 
$\RR$. Let $D_{\X}(\RR)$ be the space of c\`adlag 
trajectories with time in $\RR$ taking values in $\X$. The Skorohod
topology, the metric, the associated Borel $\sigma-$field
and all the previous notions are analogously defined. We point out
that the results previously formulated also hold, in particular the 
family of cylinders is a generating semi-algebra.
We continue to denote the metric and the associated Borel $\sigma-$field 
by $d_{\rm{Sk}}^{\X}$ and $\B(D_{\X})$ respectively, because
we want to avoid overburden notation and because there will be no 
confusion from the context.

\subsection{ The space of tessellations }
\label{Sub1.1}

We will consider tessellations on $\RR^\ell$, with $\ell\ge 1$.

\medskip

A polytope is the compact convex hull of  
a finite point set, and we will always assume that it has nonempty 
interior.
A locally finite covering of polytopes is a countable family of 
polytopes whose union is $\RR^\ell$ and all bounded sets can only 
intersect a finite number of them. These polytopes will be called cells.

\medskip

A tessellation $T$ is a locally finite covering of polytopes 
with disjoint interiors.
We denote by $\T$ the space of tessellations of $\RR^\ell$. 
We define the boundary
of a tessellation as the union of the boundaries of its cells. 
That is, for $T\in \T$ we define 
$\partial T:=\bigcup_{C\in T} \partial C$

\medskip

Let $\F$ be the family of closed sets of $\RR^\ell$ endowed with 
the Fell topology $\tT$, for definition and properties see \cite{sw},
Subsections $12.2$ and $12.3$. We denote by
$\F'=\F\setminus \{\emptyset\}$ the class of nonempty closed sets. 
We have that $(\F,\tT)$ is a compact
Hausdorff space with a countable base, so it is metrizable and 
$d$ denotes a metric on $\F$ whose topology is $\tT$. Since 
$(\F,d_\F)$ is a compact metric space, it is a Polish space 
(see Subsection \ref{sbsb1}). 
The set $\F'$ is an open set in $\tT$. Let $\tT'$ be the restriction of
$\tT$ to $\F'$, then $(\F',\tT')$ is a locally compact Hausdorff space 
with a countable base. 

\medskip

Let us denote by $\F(\F')$ the  family of closed sets of $\F'$. 
We endow it with the Fell topology and
denote by $\B(\F(\F'))$ the associated Borel $\sigma-$field. 
Each tessellation $T\in \T$, as a countable collection of polytopes
is an element of $\F(\F')$, so $\T\subset \F(\F')$. Furthermore in 
Lemma 10.1.2. in \cite{sw} it was shown that $\T\in \B(\F(\F'))$. 

\medskip

We will often enumerate the family of countable cells of a tessellation 
$T\in \T$ in a prescribed and measurable form as 
$T=\{C(T)^l:l=1,\ldots \}$. For a tessellation $T$ such that 
the origin $0$ is in the interior of its cell, the first cell
$C(T)^1$ in the enumeration will be the one containing $0$.

\medskip

Let $W\subset \RR^\ell$ be a fixed polytope with nonempty interior, 
we call it a window. As before,
$\F_W$ denotes the set of closed subsets of $W$ and we endow it with
the Fell topology, and we put $\F'_W=\F_W\setminus \{\emptyset\}$.
A tessellation $R$ in $W$ is the collection of all the cells
of a locally finite countable covering of $W$ by polytopes with disjoint 
interiors. We denote by $\T_W$ the space of tessellations of $W$.
Since $W$ is compact the locally finiteness property
implies that every $R\in \T_W$ is constituted by a finite set 
of cells, and we will denote by $|R|$ the number of the cells. 
Each $R\in \T_W$ is an element of $\F(\F'_W)$.
As before we can endow $\F(\F'_W)$
with the Fell topology which is metrizable and we denote by 
$d_{\F_W}$ a metric generating this topology. The space 
$(\F(\F'_W), d_{\F_W})$ is a compact metric space, 
we denote by $\B(\F(\F'_W))$ its Borel $\sigma-$field. We have 
$\T_W\in \B(\F(\F'_W))$, in fact the proof of Lemma 10.1.2. 
in \cite{sw} also works in this case. 
As before we also define the boundary of a tessellation 
$R\in \T_W$ by the union of the boundaries of its cells, 
$\partial R:=\bigcup_{C\in R} \partial C$.
The trivial tessellation $R$ in $\T_W$
has a unique cell which is $R=\{W\}$, and so
its boundary coincides with the boundary of the window 
$\partial R=\partial W$.

\medskip

The tessellations in $\T_W$ can be also seen as induced from a
tessellation in $\T$. In fact each $T\in \T$ induces 
a tessellation $T\wedge W$ in $\T_W$ given by the family of cells 
$T\wedge W=\{C\cap W: C\in T, \ \Int (C\cap W) \neq \emptyset\}$
(note that this set is finite by the locally finiteness property). 
Observe that $T\wedge W=\{W\}$ is 
the trivial tessellation when $W\subseteq C$ for some cell $C\in T$.
When the windows $W, W'$ are such that $W\subseteq W'$, every
$Q\in \T_{W'}$ defines in the same way as before the tessellation 
$Q\wedge W\in \T_W$. In this case $Q\wedge W=\{W\}$
if  $W\subseteq C$ for some cell $C\in Q$.

\medskip

For $a\in \RR$ and $B\subseteq \RR^\ell$, we put $aB=\{ax: x\in B\}$.
Observe that if $W$ is a window and $a\neq 0$ then $aW$ is also a window.
For $T\in \T$ and $a\in \RR\setminus \{0\}$ the
tessellation $aT$ is given by the set of cells
$aT=\{aC: C\in T \}$.
Analogously for a window $W$ and a tessellation
$Q\in \T_W$, the tessellation $aQ\in \T\wedge aW$ is given by  
$aQ=(aC: C\in Q )$. If $W$ is a window containing $0$,
$a>1$, and $Q\in \T_W$, the tessellation $aQ$ belongs to 
$\T_{aW}$ and $W\subset aW$, so we can take the restriction 
$aQ\wedge W\in \T_W$.

\medskip

Since $\F(\F')$ is a compact metric space, for a probability
measure $\nu$ defined on $(\F(\F'),\B(F(\F')))$, the completed 
probability space $((\F(\F'),\B(F(\F')),\nu)$ is  
Lebesgue, see Subsection \ref{sbsb2}. 
Analogously, for any probability measure $\nu_W$ defined
on $(\F(\F'_W),\B(F(\F'_W)))$, the completed probability space 
$((\F(\F'_W),$$\B(F(\F'_W)),\nu_W)$ is Lebesgue. 
Since $\T\in \B(\F(\F'))$ 
its associated Borel $\sigma$-field is $\B(\T)=\{B\cap \T: B\in
\B(\F(\F'))\}$ and for any probability measure $\nu$ defined
on $(\T, \B(\T))$ the completed 
probability space $(\T, \B(\T), \nu)$ is Lebesgue.
Analogously for $\T_W$. We have $\B(\T_W)=\{B\cap \T: B\in
\B(\F(\F'_W))\}$ and for any probability measure $\nu_W$ defined
on $(\T_W, \B(\T_W))$ the completed probability space
$(\T_W, \B(\F_W), \nu_W)$ is Lebesgue.
Also for any countable set $L$ the product probability spaces
$(\T^L, \B(\T))^{\otimes L}, \nu^{\otimes L})$
and $(\T_W^L, (\B(\T_W))^{\otimes L}, \nu_W^{\otimes L})$ 
are Lebesgue.

\subsection{ The STIT tessellation process }
\label{Sub1.2}

Let us construct $Y=(Y_t: t > 0)$ 
a STIT tessellation process (see 
\cite{nw}, \cite{mnw}), which is a Markov processes where each
marginal $Y_t$ takes values in $\T$. 
The law of $Y$ only depends on a (non-zero) 
$\sigma$-finite and translation invariant 
measure $\Lambda$ on the space of hyperplanes $\hH$ in $\RR^\ell$. 
It is assumed that the support set of $\Lambda$ satisfies that 
there is no line in $\RR^\ell$ such that all hyperplanes of the support 
are parallel to it (in order to obtain a.s. bounded cells 
in the constructed tessellation).
For all sets $W\subseteq \RR^\ell$ put 
$$
[W]=\{H\in \hH: H\cap W\neq \emptyset\}\,.
$$ 
The assumptions imply $0<\Lambda([W])<\infty$ for every window $W$. The 
translation invariance of $\Lambda$ yields 
(see e.g. \cite{sw}, Theorem 4.4.1.)
\begin{equation}
\label{homlam}
\Lambda ([cW]) = c\, \Lambda ([W]) \quad \mbox{ for all } c>0.
\end{equation}

Denote by $\Lambda_{[W]}$ the restriction of $\Lambda$ to $[W]$ and by 
$\Lambda_W=\Lambda ([W])^{-1}\Lambda_{[W]}$ the normalized probability 
measure.

\medskip

Let us first construct
$Y\wedge W=(Y_t \wedge W: t\ge 0)$ for a window $W$. We note that 
even if for $t=0$ 
the object $Y_0$ does not exist as a tessellation of the whole $\RR^\ell$ 
we define $Y_0\wedge W= \{W\}$ the trivial tessellation for the 
window $W$. 
Let us take two independent families of 
independent random variables $D=(d_{n,m}: n,m\in \NN)$ and 
$\tau=(\tau_{n,m}: n,m\in \NN)$, where each $d_{n,m}$ has distribution 
$\Lambda_W$ and each $\tau_{n,m}$ is exponentially distributed with 
parameter $1$. We define
a sequence of increasing random times $(S_n: n\ge 0)$ and a 
sequence of random tessellations $(Y_{S_n}\wedge W: n\ge 0)$ with,
$S_0=0$ and $Y_0\wedge W= \{W\}$. The process $Y \wedge W$ 
will satisfy
\begin{equation}
\label{cadlagY}
Y_t \wedge W=Y_{S_n} \wedge W, \;\, t\in [S_n,S_{n+1}). 
\end{equation}
The definition of
$(S_n: n\ge 0)$ and $(Y_{S_n} \wedge W : n\ge 0)$ is done by an inductive 
procedure. 
Let $\{C^1_t,...,C^{l_t}_t\}$ be the cells of $Y_{S_n} \wedge W$, we 
put
$$
S_{n+1}=S_n+\tau(Y_{S_n} \wedge W) \hbox{ where }
\tau(Y_{S_n} \wedge W)=\min\{\tau_{n,l}/\Lambda([C^l_t]): l=1,...,l_t\}.
$$
Let $l_0$ be such that $\tau_{n,l_0}/\Lambda([C_t^{l_0}])=\tau(Y_{S_n} 
\wedge W)$  
(it is a.s. uniquely defined). We denote by $m$  the first index
such that $d_{n+1,m}\in  [C_t^{l_0}]$. The variable $d_{n+1,m}$
is distributed as $\Lambda_{C_t^{l_0}}$. The
tessellation $Y_{S_{n+1}} \wedge W$  is defined as the one whose  
cells are $\{C^l_t: l\neq l_0 \} \cup \{ C'_1, C'_2\}$
where $C'_1, C'_2$ is the partition of $C^{l_0}_t$ by the hyperplane
$d_{n+1,m}$.

\medskip

In particular, since $S_1$ is exponentially distributed with parameter 
$\Lambda ([W])$, 
\begin{equation}
\label{primero22}
\PP(\partial(Y_t \wedge W )\cap \Int W \!=\!\emptyset )\!=\! 
\PP(Y_t \wedge W \!=\!\{W\})\!=\!
\PP(Y_t \wedge W \!=\!Y_0 \wedge W)\!=\!e^{-\Lambda([W])} \,.
\end{equation}

The process $Y\wedge W$ is a Markov process.
Also, this construction 
yields a law that is consistent with respect to $W$, that is
if $W$ and $W'$ are windows and $W\subseteq W'$, 
then  $(Y \wedge W') \wedge W \sim Y \wedge W$, where $\sim$ denotes 
the identity of distributions. 
A proof of consistency  showing the 
existence of the law of the process $Y$ was given in \cite{nw}.

\medskip

Since $\Lambda$ is translation invariant, without loss of generality 
we can always use a window
$W$ with the origin $0$ in its interior and we can also 
assume that $\PP-$a.e. at all times the origin belongs to the interior
of the its cell. This cell is called the $0-$cell.  

\medskip

From (\ref{cadlagY}) it follows that for every window $W$
the process $Y\wedge W$ is a pure jump Markov process with 
c\`adlag trajectories, so its trajectories take values in
the space $D_{\T_W}(\RR_+)$. Recall that 
$D_{\T_W}(\RR_+)$ is endowed with the Skorohod topology generated 
by the metric $d_{\rm{Sk}}^{\T_W}$ (see Subsection \ref{sbsb2}). 
Since $(\T_W, d_{\F_W})$ is a separable space, 
$(D_{\T_W}(\RR_+), d_{\rm{Sk}}^{\T_W})$ 
is also separable. $\B(D_{\T_W})$ denotes 
the Borel $\sigma-$field associated to 
$(D_{\T_W}(\RR_+), d_{\rm{Sk}}^{\T_W})$. 
As before $D_{\T_W}(\RR)$ is 
the space of c\`adlag trajectories taking values in $\T_W$ 
with time in $\RR$. 
The respective metric and Borel $\sigma-$field continue to be 
written by $d_{\rm{Sk}}^{\T_W}$ and $\B(D_{\T_W})$.

\medskip

By technical reasons it is useful to consider the closure 
$\overline{\T_W}$ of $\T_W$ in $\F(\F'_W)$. The space 
$D_{\overline{\T_W}}(\RR_+)$ can be also endowed with the Skorohod 
topology which is generated by a metric $d_{\rm{Sk}}^{\overline{\T_W}}$. 
Since $(\overline{\T_W}, d_{\rm{Sk}}^{\overline{\T_W}})$ 
is a Polish space, from Theorem $5.6$ in Chapter $3$ in \cite{ek} 
we get that $(D_{\overline{\T_W}}(\RR_+), d_{\rm{Sk}}^{\overline{\T_W}})$ 
is also a Polish space. 
$\B(D_{\overline{\T_W}})$ denotes its Borel $\sigma-$field. 
Hence, for a window $W$ we can also consider that the trajectories 
of the Markov process $Y\wedge W$ take values in the Polish 
space $(D_{\overline{\T_W}}(\RR_+), d_{\rm{Sk}}^{\overline{\T_W}})$. 
Also the extension $D_{\overline{\T_W}}(\RR)$ to processes with 
times in $\RR$ is needed, all the previous definitions and 
results hold and we also denote by $\B(D_{\overline{\T_W}})$ 
the associated Borel $\sigma-$field.

\subsubsection{ Independent increments relation}
\label{indincr}
It is useful to supply an independence relation on the
increments of the Markov process $Y$ which is written in terms
of the following operation. For $T\in \T$ and 
${\vec R}=(R^m: m\in \NN)\in \T^\NN$, 
we define the tessellation $T\boxplus {\vec R}$, referred to 
as the iteration of $T$ and ${\vec R}$, by
its set of cells
$$
T\boxplus {\vec R}\!=\!(C(T)^k\!\cap \!C({R}^k)^l: \, 
k\!=\!1,...;\, l\!=\!1,...;\, 
\Int(C(T)^k\!\cap \!C({R}^k)^l)\!\neq \!\emptyset).
$$
So, we restrict ${R}^k$ to the cell $C(T)^k$ and this is done for all
$k=1,\ldots $.
The same definition holds when the tessellation and the sequence of
tessellations are restricted to some window.

\medskip

To state the independence relation of the increments of $Y$, we fix
a copy of the random process $Y$ and let ${\vec Y}'=({Y'}^m: m\in 
\NN)$ be a sequence of independent copies of $Y$, all of them being 
also independent of $Y$. In particular ${Y'}^m\sim Y$.
For a fixed time $s>0$, we set ${\vec Y}'_s=({Y_s'}^m: m\in \NN)$.
Then, from the construction of $Y$ it is straightforward to see that
the following property holds
\begin{equation}
\label{iterate}
Y_{t+s} \sim Y_t\boxplus {\vec Y}'_s \ \mbox{ for all }t,s>0\,.
\end{equation}
This relation was first stated in Lemma $2$ in \cite{nw}. Moreover
the construction done in the proof of this Lemma $2$ also shows the
following relation. Let ${\vec Y}^{'(i)}$, $i=1,\ldots,j$, 
be a sequence of 
$j$ independent copies of ${\vec Y}'$, which are also independent of $Y$.
Then, for all $0<s_1<...<s_j$ and all $t>0$ we have
\begin{equation}
\label{iterate22}
(Y_{t+s_1},...,Y_{t+s_j}) \sim 
(Y_t\boxplus {\vec Y}^{'(1)}_{s_1},...,
(((Y_t\boxplus {\vec Y}^{'(1)}_{s_1})\boxplus....)
\boxplus{\vec Y}^{'(j)}_{s_j-s_{j-1}})). 
\end{equation}

\subsection{ Elements of ergodic theory }
\label{suberg}

A dynamical system $(\Os, \B(\Os),\mu, \psi)$ is such that 
$(\Os, \B(\Os),\mu)$ is a probability space and $\psi:\Os\to \Os$
is a measure-preserving measurable transformation, that is
$\mu(\psi^{-1}(B))=\mu(B)\;$ $\forall \, B\in \B(\Os)$.
When  $(\Os, \B(\Os),\mu, \psi)$ and 
$(\Os', \B(\Os'),\mu', \psi')$
are two dynamical systems, the measurable map 
$\varphi:\Os\to \Os'$ is called a factor map if it satisfies 
$\varphi\circ \psi=\psi'\circ \varphi$ $\; \mu-$a.e. 
and  $\mu(\varphi^{-1}(B'))=\mu'(B')\;$ $\forall \, B'\in \B(\Os')$.
If a factor map $\varphi$ is one-to-one $\mu-$a.e., onto $\mu'-$a.e. 
and $\varphi^{-1}$ is also measurable,  
then it is called an isomorphism and the
dynamical systems $(\Os, \B(\Os),\mu, \psi)$ and 
$(\Os', \B(\Os'),\mu', \psi')$ are called isomorphic.
When $(\Os, \B(\Os),\mu)$ and $(\Os', \B(\Os'),\mu')$ 
are Lebesgue  probability spaces, the
measurability condition on $\varphi^{-1}$ is not 
explicitly needed in the isomorphism requirements
because it is implied by the other ones.

\medskip

The dynamical system $(\Os, \B(\Os),\mu, \psi)$ is ergodic if
$\mu(\psi^{-1}B\Delta B)=0$ implies $\mu(B)\mu(B^c)=0$
(where as usual we set $A\Delta B= (A\setminus B) \cup (B\setminus A)$). It is
mixing if $\lim\limits_{n\to \infty}\mu(\psi^{-n} A\cap B)
=\mu(A)\mu(B)$ for all $A,\,B\in \B(\Os)$. Mixing implies 
ergodicity. To avoid overburden notation the dynamical
system $(\Os, \B(\Os),\mu, \psi)$ is usually denoted
by $(\Os, \mu, \psi)$.

\medskip

Let $(\aS, \B(\aS))$ be a measurable space (i.e. $\aS$ is endowed
with a $\sigma-$field $\B(\aS)$). Let $L=\NN$ or $L=\ZZ$.  
The shift transformation $\sigma_{\aS}:\aS^L\to \aS^L$ defined by
$\sigma_{\aS}(x_n:n\in L)=(x_{n+1}:n\in L)$ is 
a measurable transformation. If the probability measure $\mu$ 
defined on $(\aS^L, \B(\aS^L))$ is preserved by 
$\sigma_{\aS}$ then $(\aS^L, \B(\aS^L), \mu, \sigma_{\aS})$ 
(or simply $(\aS^L, \mu, \sigma_{\aS})$) is a dynamical system 
called a shift system (or simply a shift). When $L=\NN$ it is called a
one-sided shift, and if $L=\ZZ$ then it is called a two-sided shift.  
An example of a two-sided shift is given by a stationary random 
sequence $\Y^d=(\Y_n: n\in \ZZ)$ with state space $\aS$. Indeed, if
$\mu^{\Y^d}$ is the distribution of $\Y^d$ on $\aS^\ZZ$, the stationary 
property of $\Y^d$ means that $\mu^{\Y^d}$ is $\sigma_{\aS}-$invariant
and so $(\aS^L, \mu^{\Y^d}, \sigma_{\aS})$ is a shift system. 

\medskip

Let us recall the Bernoulli property.
Let $(\aS, \B(\aS), \nu_\aS)$ be a probability
space and $L=\NN$ or $L=\ZZ$. Let 
$(\aS^L, \B(\aS^L), \nu_\aS^{\otimes L})$ be the product 
probability space.
The shift action $\sigma_\aS$ preserves the product probability 
measure $\nu_\aS^{\otimes L}$ and
$(\aS^L,\nu_\aS^{\otimes L},\sigma_\aS)$ is called a
Bernoulli shift, it is two-sided when $L=\ZZ$ and one-
sided when $L=\NN$.
In notation of \cite{orns2} Part I, Section 9, 
$(\aS^\ZZ,\nu_\aS^{\otimes \ZZ},\sigma_\aS)$ is called a
generalized two-sided Bernoulli shift (the name generalized 
is because $\aS$ is not necessarily a countable set).
A Bernoulli shift is mixing (so ergodic).
Let us assume that $(\aS, \B(\aS), \nu_\aS)$ is a Lebesgue
probability space. 
Then the entropy $h(\sigma_\aS,\nu_\aS^{\otimes L})$ 
of the Bernoulli shift satisfies
$h(\sigma_\aS,\nu_\aS^{\otimes L})=H(\nu_\aS)$,
where $H(\nu_\aS)=\infty$ if
$\nu_\aS$ has a non-atomic part and
$$
H(\nu_\aS)=-\sum_{A\in \A(\nu_\aS)} \nu_\aS(A)\log(\nu_\aS(A))
$$
when $\nu_\aS$ is purely
atomic and where  $\A(\nu_\aS)$ denotes the set of its atoms
(singletons of positive $\nu_\aS-$measure). The
Ornstein isomorphism theorem
(see \cite{orns10} and \cite{orns11}) states that two-sided 
Bernoulli shifts (defined on Lebesgue probability spaces) having 
the same entropy are isomorphic.

\medskip

Let us introduce what a finitary factor is. If 
$(\aS^\ZZ,\nu_\aS^{\otimes \ZZ},\sigma_\aS)$ 
and $({\aS'}^\ZZ,\nu_{\aS'}^{\otimes \ZZ},\sigma_{\aS'})$ 
are two two-sided Bernoulli shifts, the
measurable map $\varphi:\aS^\ZZ\to {\aS'}^\ZZ$ is a finitary 
factor map if it is a factor map and $\nu_\aS^{\otimes \ZZ}-$a.e.
in $x=(x_n: n\in \ZZ)\in \aS^\ZZ$
the coordinate $(\varphi (x))_n$ only depends on a finite sequence 
of values $(x_m: m\in [n-M(x), n+M'(x)])$. From
$(\varphi(x))_n=(\sigma_{\aS'}^n\circ 
\varphi(x))_0=(\varphi\circ \sigma_{\aS}^n (x))_0$ 
we get that the finitary property can be stated as: 
 $\nu_\aS^{\otimes \ZZ}-$a.e.
in $x=(x_n: n\in \ZZ)\in \aS^\ZZ$ the $0-$th 
coordinate $(\varphi (x))_0$ only depends on a finite sequence
$(x_m: m\in [-M(x), M'(x)])$. 
We call $M(x)$ and $M'(x)$ the memory length and
the anticipation length (for $x$) respectively.
A finitary isomorphism can be defined in an analogous way.
We note that when the state spaces $\aS$ and ${\aS'}$ are finite, 
a finitary factor is a.e. continuous (that is in a set of 
full measure the factor map is continuous when the product spaces
are endowed with the product topologies).
In \cite{ks1} and \cite{ks2} there was introduced a method to 
construct a finitary isomorphism between two Bernoulli shifts of the
same entropy with finite state spaces $\aS$ and ${\aS'}$. 
In \cite{pt} the finitary relation is
studied for topological Markov chains with finite state spaces.

\medskip

A flow (or continuous time dynamical system) 
$(\Os, \B(\Os),\mu, (\psi^t \!:\! t\! \in \! \RR))$ is such that 
$(\Os, \B(\Os),\mu)$ is a probability space and 
$\psi^t:\Os\to \Os$
is a measure-preserving measurable transformation for all $t\in \RR$. 
All the previous notions can be extended from dynamical systems to flows,
in particular ergodicity, mixing and isomorphism of flows.
The shift flows are defined with respect to the shift 
transformations
$\sigma^t(x_s\!:\! s\in \RR)=(x_{s+t}\!:\! s\! \in \! \RR)$ for $t\in \RR$. 
An example of a shift flow is given by a stationary random
process $\Y=(\Y_t: t\in \RR)$ with state space $\aS$.
If $\mu^\Y$ is the distribution of $\Y$ on 
the product measurable space $(\aS^\RR, \widehat{\B}(\aS^\RR))$
then the stationary property of $\Y$ means that
$\mu^\Y$ is $\sigma^t_{\aS}-$invariant
for all $t\in \RR$ and so
$(\aS^\RR, \widehat{\B}(\aS^\RR),\mu^\Y, 
(\sigma^t_{\aS}\!:\! t\! \in \!\RR))$ is a shift flow.
In the case $(\aS,d_\aS)$ is a metric space and the stationary random
process $\Y$ has c\`adlag trajectories, 
let $\mu^\Y$ be the distribution of $\Y$ on $D_{\aS}(\RR)$. 
The stationary property of $\Y$ means that 
$\mu^\Y$ is $\sigma^t_{\aS}-$invariant for $t\in \RR$ and so
$(D_{\aS}(\RR), \B(D_{\aS}),\mu^\Y, (\sigma^t_{\aS}: t\! \in \!\RR))$ is a 
shift flow.

\medskip

A Bernoulli flow is defined in Section $12$, part $2$ in
\cite{orns2} as a flow $(\Os, \B(\Os),\mu, (\psi^t: t\! \in \! \RR))$
such that $(\Os, \B(\Os),\mu, \psi^1)$ is isomorphic 
to a Bernoulli shift. The entropy of the flow is defined to be
the entropy of $(\Os,\mu, \psi^1)$.
The isomorphism theorem for 
Bernoulli flows, Theorem $4$ in Section $12$, part $2$ 
in \cite{orns2}, states that if 
$(\Os, \B(\Os),\mu, (\psi^t\!:\! t\! \in \!\RR))$ and 
$(\Os', \B(\Os'),\mu', ({\psi'}^t\!:\! t\! \in \! \RR))$ are two Bernoulli 
flows 
with the same entropy and such that its completed probability spaces 
$(\Os, \B(\Os),\mu)$ and $(\Os', \B(\Os'),\mu')$ 
are Lebesgue, then the two flows isomorphic.

\subsection{ Renormalized stationary tessellation process and 
main results }
\label{Sub1.3}

Fix $a>1$ and define the process $\Z=(\Z_s: s\in \RR)$ by
$$
\Z_s=a^s Y_{a^s} \,, \;\;\;s\in \RR\,.
$$
Note that $\Z_0=Y_1$. In this context the $t-$shift transformation 
$\sigma_\T^t$ can be expressed as 
$$
\sigma_\T^t \circ \Z=((\sigma^t\circ \Z)_s: s\in \RR) \hbox{ with }
(\sigma_\T^t\circ \Z)_s=\Z_{s+t}.
$$
For any window $W$ we set $\Z \wedge W=(\Z_s\wedge W: s\in \RR)$, and
the shift $\sigma_\T^t$ has an analogous expression as above.  

\medskip

We now state our main results whose proofs will be given in the next Section
\ref{sec2}.

\medskip

In the following result, 
the trajectories of the process $\Z$ take values in the product space 
$\T^\RR$, which is endowed
with the product $\sigma-$field $\widehat \B(\T^\RR)$.
We denote by $\mu^\Z$ the law induced by $\Z$ on 
$(T^\RR,\widehat \B(\T^\RR))$.

\begin{theorem}
\label{station}
$\Z$ is a stationary Markov process, this means that for all $t\in \RR$
the equality in distribution $\Z\sim \sigma_\T^t \circ \Z$ is verified. 

\smallskip

Hence $(T^\RR,\widehat \B(\T^\RR), \mu^\Z, (\sigma^t: t\! \in \!\RR))$
is a shift flow.

\end{theorem}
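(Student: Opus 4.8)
The plan is to treat the two assertions separately. That $\Z$ is a Markov process is not the heart of the matter: the time change $s\mapsto a^s$ is a strictly increasing continuous bijection of $\RR$ onto $(0,\infty)$, and for each fixed $s$ the spatial scaling $T\mapsto a^sT$ is a bi-measurable bijection of $\T$. Since composing the Markov process $Y$ with a deterministic monotone time change and then with a deterministic, time-dependent, invertible recoding of the state leaves the filtration and the form of the conditional laws unchanged, $\Z$ inherits the Markov property from $Y$. The substance is the stationarity, i.e. $\Z\sim\sigma_\T^t\circ\Z$ for every $t\in\RR$. Fixing $t$ and writing $c=a^t>0$, for $s\in\RR$ one has
$$
(\sigma_\T^t\circ\Z)_s=\Z_{s+t}=a^{s+t}Y_{a^{s+t}}=(c\,a^s)\,Y_{c\,a^s}.
$$
Since $s\mapsto a^s$ is a bijection onto $(0,\infty)$, the identity $\Z\sim\sigma_\T^t\circ\Z$ is equivalent to the statement that the process $(u\,Y_u:u>0)$ has the same law as $(c\,u\,Y_{cu}:u>0)$. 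Thus everything reduces to a single scaling (self-similarity) property of $Y$.

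First I would isolate this property as the claim that, for every $\lambda>0$, $(\lambda\,Y_{\lambda u}:u>0)\sim(Y_u:u>0)$. Granting it, applying to both paths the deterministic $u$-dependent scaling $R\mapsto uR$ turns the claim (with $\lambda=c$) into $(c\,u\,Y_{cu}:u>0)\sim(u\,Y_u:u>0)$, which is exactly what is needed. To prove the claim I would reduce to windows: the law on $(\T^\RR,\widehat{\B}(\T^\RR))$ is determined by its finite-dimensional marginals, and the law of a random tessellation (or of a finite vector of them) is determined by its restrictions to windows $W$; hence it suffices to compare $(\lambda Y_{\lambda u}\wedge W:u>0)$ with $(Y_u\wedge W:u>0)$ for every window $W$. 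I would then argue in two steps. The base step is the one-dimensional marginal $\lambda Y_{\lambda u}\wedge W\sim Y_u\wedge W$, proved from the window construction of Subsection \ref{Sub1.2}: writing $\lambda Y_{\lambda u}\wedge W=\lambda\,(Y_{\lambda u}\wedge\lambda^{-1}W)$, the driving data of the construction in $\lambda^{-1}W$ transform correctly under scaling by $\lambda$, the normalized hyperplane laws mapping $\Lambda_{\lambda^{-1}W}$ to $\Lambda_W$ and, by \eqref{homlam}, the cell split rates $\Lambda([\lambda^{-1}C])=\lambda^{-1}\Lambda([C])$ being exactly compensated by the time change $u\mapsto\lambda u$. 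The bootstrap step propagates the base step to all finite-dimensional marginals by induction on the number of time points, using the independent-increments relations \eqref{iterate} and \eqref{iterate22} together with the elementary scale-equivariance $\lambda(T\boxplus\vec R)=(\lambda T)\boxplus(\lambda\vec R)$, which holds because scaling commutes with intersection and with taking interiors: given the claim at times $u_1<\dots<u_k$, one writes the transition to a further time $u_{k+1}$ as an iteration with an independent copy, scales it, and invokes scale-equivariance and the base step applied to the increment.

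Once the scaling claim is in hand, the reduction of the first paragraph yields $\Z\sim\sigma_\T^t\circ\Z$ for all $t$, which is the stated stationarity; the final assertion that $(\T^\RR,\widehat{\B}(\T^\RR),\mu^\Z,(\sigma^t:t\in\RR))$ is a shift flow is then immediate from the definition recalled in Subsection \ref{suberg}, since a stationary process generates a measure-preserving shift flow. I expect the real obstacle to be the base step. The delicate point is to verify rigorously that spatial scaling by $\lambda$ pushes the normalized hyperplane measure $\Lambda_{\lambda^{-1}W}$ to $\Lambda_W$ — a scale-covariance of $\Lambda$ that is consistent with, but slightly stronger than, the mass identity \eqref{homlam} — and to check that the exponential clocks, with their cell-dependent rates, match after the time change $u\mapsto\lambda u$; here one must also be careful that the reduction to windows and the appeal to consistency of the family $\{Y\wedge W\}$ are legitimate, so that equality of all window processes does upgrade to equality of laws on the product $\sigma$-field.
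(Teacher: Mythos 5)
Your proposal is correct, and its skeleton matches the paper's: stationarity is reduced to the scaling property of STIT together with the independent-increments (iteration) relation, and the law on the product $\sigma$-field is handled through finite-dimensional marginals and consistency, exactly as the paper does via Neveu. The genuine difference is in how the two ingredients are treated. The paper does the minimum: it quotes the marginal self-similarity $tY_t\sim Y_1$ (their (\ref{homothet})) from Lemma 5 of \cite{nw} without proof, notes that $\Z$ inherits the Markov property from $Y$, and then upgrades identical one-dimensional marginals to stationarity of all finite-dimensional distributions by a one-line computation showing the transition kernel of $\Z$ is time-homogeneous, $\PP(\Z_{s+t}\in B\mid \Z_s=z)=\PP(z\boxplus{\vec Y}'_{a^t-1}\in a^{-t}B)$, using (\ref{homothet}) and (\ref{iterate}). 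You instead prove the stronger, process-level self-similarity $(\lambda Y_{\lambda u}:u>0)\sim(Y_u:u>0)$: your base step re-derives (\ref{homothet}) from the window construction, and your induction over time points via (\ref{iterate22}) replaces the kernel-homogeneity computation. Both routes are sound; yours buys self-containedness at the cost of verifying the scale covariance of $\Lambda$ itself (the pushforward of $\Lambda$ under $H\mapsto\lambda H$ is $\lambda^{-1}\Lambda$), which you rightly flag as strictly stronger than the mass identity (\ref{homlam}); it does follow from the decomposition of translation-invariant hyperplane measures in Theorem 4.4.1 of \cite{sw}, the same result the paper cites for (\ref{homlam}). One small caveat: the pointwise identity $\lambda(T\boxplus{\vec R})=(\lambda T)\boxplus(\lambda{\vec R})$ presupposes that the cell enumeration of $\lambda T$ is the scaled enumeration of $T$; with the paper's prescribed measurable enumeration it need only hold in distribution, which is harmless here because the entries of ${\vec R}$ are i.i.d. and independent of $T$, but it deserves a sentence in a complete write-up.
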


\medskip

When $W$ is a window the process $\Z \wedge W=(\Z_s\wedge W: s\in \RR)$
inherits from $Y \wedge W$ the property of having c\`adlag trajectories.
Since the trajectories of $Y \wedge W$ take values in $D_{\T_W}(\RR_+)$ 
then the trajectories $\Z \wedge W$ take values in 
$D_{\T_W}(\RR)$. We recall that $(D_{\T_W}(\RR), d_{\rm{Sk}}^{\T_W})$ 
is a separable space. By $\B(D_{\T_W})$ we denoted the 
Borel $\sigma-$field 
associated to $(D_{\T_W}(\RR), d_{\rm{Sk}}^{\T_W})$ 
and by $\mu^\Z_W$ we denote the law induced
by $\Z\wedge W$ on $(D_{\T_W}(\RR),\B(D_{\T_W}))$.

\begin{theorem}
\label{mix22}
For any window $W$, the process $\Z \wedge W$ 
is a stationary Markov process that is mixing in time: 
\begin{equation}
\label{mix0}
\lim_{t\to \infty} 
\PP (\Z \wedge W  \in {\widehat A}, 
\sigma_\T^t\circ \Z \wedge W \in {\widehat B}) =
\PP (\Z \wedge W \in {\widehat A}) \PP (\Z \wedge W \in {\widehat B})
\end{equation}
for all events ${\widehat A},\, {\widehat B}$ in the
Borel $\sigma-$field $\,\B(D_{\T_W})$.

\smallskip

Hence, $(D_{\T_W}(\RR), \B(D_{\T_W}), \mu^\Z_W, 
(\sigma^t_\T: t\! \in \! \RR))$ is
a mixing shift flow.  
\end{theorem}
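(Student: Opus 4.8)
The plan is to obtain the three assertions --- stationarity, the Markov property and mixing --- from the renormalized stationarity of Theorem \ref{station}, the independence-of-increments relations (\ref{iterate})--(\ref{iterate22}), and the shrinking-window estimate of Lemma \ref{fundadef}. Stationarity is immediate: the coordinatewise restriction map $(\T_s)_{s\in\RR}\mapsto(\T_s\wedge W)_{s\in\RR}$ from $\T^\RR$ to $D_{\T_W}(\RR)$ is measurable and intertwines the shift, since restricting $\sigma_\T^t\circ\Z$ to $W$ coincides with $\sigma_\T^t$ applied to $\Z\wedge W$. Pushing the identity $\Z\sim\sigma_\T^t\circ\Z$ of Theorem \ref{station} through this map gives $\Z\wedge W\sim\sigma_\T^t(\Z\wedge W)$ for every $t$.

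For the Markov property I would write, for $s\in\RR$ and $h>0$, $\Z_{s+h}\wedge W=a^{s+h}\bigl(Y_{a^{s+h}}\wedge a^{-(s+h)}W\bigr)$ and invoke (\ref{iterate}) at the base time $a^{s}$: $Y_{a^{s+h}}$ has the law of $Y_{a^{s}}\boxplus\vec Y'$ with $\vec Y'$ independent of the evolution up to $a^{s}$. Restricting an iteration to $a^{-(s+h)}W$ only uses $Y_{a^{s}}\wedge a^{-(s+h)}W$ together with the fresh copies. The key point is that, because $a>1$ and $W$ is convex with $0\in\Int W$, the windows nest as $a^{-(s+h)}W\subseteq a^{-s}W$, so $Y_{a^{s}}\wedge a^{-(s+h)}W$ is a deterministic function of $Y_{a^{s}}\wedge a^{-s}W$, which is exactly the information carried by $\Z_s\wedge W$. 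Hence the conditional law of $\Z_{s+h}\wedge W$ given the past depends on that past only through $\Z_s\wedge W$.

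The heart of the proof is (\ref{mix0}). Since the cylinders form a generating semi-algebra of $\B(D_{\T_W})$, it suffices to establish the factorization when $\widehat A$ depends on $(\Z_{s_1}\wedge W,\dots,\Z_{s_p}\wedge W)$ and $\widehat B$ on the shifted coordinates $(\Z_{s_1+t}\wedge W,\dots,\Z_{s_q+t}\wedge W)$, and then to extend by a monotone-class/approximation argument. For $t$ large all future times exceed $s_p$, so, conditionally on the evolution up to time $a^{s_p}$, relation (\ref{iterate22}) represents $(Y_{a^{s_1+t}},\dots,Y_{a^{s_q+t}})$ as nested iterations of the single tessellation $Y_{a^{s_p}}$ with fresh copies independent of that past; meanwhile $\widehat A$ is measurable with respect to the past up to $a^{s_p}$. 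Restricting the $j$-th future coordinate to $W$ is the same as restricting $Y_{a^{s_p}}\boxplus(\text{fresh})$ to the tiny nested windows $a^{-(s_j+t)}W$, whose diameters shrink to $0$. Let $E_t$ be the event that $\partial Y_{a^{s_p}}$ misses the interiors of all these windows; on $E_t$ each future coordinate is reduced to the restriction of the fresh copies alone, so $\widehat B$ agrees on $E_t$ with an event $\widehat B'$ that depends only on the fresh copies and is therefore independent of both $\widehat A$ and $E_t$. Writing $\PP(\widehat A\cap\widehat B)=\PP(\widehat A\cap\widehat B'\cap E_t)+O(\PP(E_t^c))=\PP(\widehat A\cap E_t)\,\PP(\widehat B')+O(\PP(E_t^c))$ and letting $t\to\infty$ gives (\ref{mix0}), because $\PP(E_t^c)\to0$ and hence $\PP(\widehat A\cap E_t)\to\PP(\widehat A)$ and $\PP(\widehat B')\to\PP(\widehat B)$. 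That $\PP(E_t^c)\to 0$ is precisely the content of Lemma \ref{fundadef}, with the decay governed through homogeneity (\ref{homlam}) by $\Lambda([a^{-(s_j+t)}W])=a^{-(s_j+t)}\Lambda([W])\to0$ (compare (\ref{primero22})). Stationarity together with (\ref{mix0}) for all Borel events is, by the definitions in Subsection \ref{suberg}, exactly the assertion that $(D_{\T_W}(\RR),\B(D_{\T_W}),\mu^\Z_W,(\sigma^t_\T:t\in\RR))$ is a mixing shift flow.

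The main obstacle is the simultaneous decoupling at the several future times: one must arrange the iteration (\ref{iterate22}) so that all future coordinates share the one past tessellation $Y_{a^{s_p}}$, verify that on $E_t$ the restriction to $a^{-(s_j+t)}W$ genuinely factors through the fresh copies for every $j$ at once, and control the resulting error uniformly --- which is exactly why the nested-window formulation of Lemma \ref{fundadef} is needed rather than a single-window estimate. By comparison, the reduction from cylinders to arbitrary Borel sets and the measurability bookkeeping on the Polish and Lebesgue spaces are routine.
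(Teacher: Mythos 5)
Your argument is correct in substance and its core mechanism is the one the paper uses: reduce (\ref{mix0}) to cylinders via the generating semi-algebra of $\B(D_{\T_W})$, condition on the event $E_t$ that the boundary of the past tessellation misses the shrinking window, and use the independence-of-increments relations (\ref{iterate})--(\ref{iterate22}) to decouple the restricted future from the past, with $\PP(E_t^c)\to 0$. Where you genuinely diverge is in how the limit is closed. The paper, after the conditional factorization (its analogue of your $E_t$-step, culminating in (\ref{lun3})), still has to prove the time-continuity statements (\ref{mar40}) and (\ref{mar40*}), i.e.\ $\lim_{t\to\infty}\PP(Y_{1-a^{-t}}\wedge W\in B)=\PP(Y_1\wedge W\in B)$, and does so head-on: it introduces the auxiliary events $D_{k,t}$ of (\ref{defdd}), decomposes over the number of cells as in (\ref{mar41}), uses $\PP(D_{k,t})=e^{-a^{-t}\Lambda([W])k}$ and monotone convergence to get (\ref{mar42}). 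You bypass all of this: since stationarity is already available (in fact only the scaling $a^tY_{a^t}\sim Y_1$ of (\ref{homothet}) is needed), $\PP(\sigma_\T^t\circ\Z\wedge W\in\widehat B)=\PP(\Z\wedge W\in\widehat B)$, and because the shifted event coincides with $\widehat B'$ on $E_t$, one gets $|\PP(\widehat B')-\PP(\widehat B)|\le 2\,\PP(E_t^c)\to 0$ essentially for free. This is a genuine simplification --- it makes the entire $D_{k,t}$ machinery unnecessary for Theorem \ref{mix22} --- at the price that the whole weight now rests on the on-$E_t$ identification at several future times simultaneously, which you correctly single out as the main obstacle and which the paper carries out through its displayed event equalities in the multi-time part.

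Two cautions. First, your appeal to Lemma \ref{fundadef} is misplaced: $\PP(E_t^c)\to 0$ is the elementary limit (\ref{limit1}), obtained by monotone convergence of $\{\partial Y_1\cap\Int(a^{-t}W)=\emptyset\}$ onto $\{\partial Y_1\cap\{0\}=\emptyset\}$ (or quantitatively from $e^{-a^{-t}\Lambda([W])}$ via (\ref{primero22}) and (\ref{homlam})); Lemma \ref{fundadef} is a different statement about a sequence of \emph{independent} copies in the geometric windows $a^kW$ and is used only for Theorem \ref{bernoulli}. Relatedly, your closing remark that a nested-window formulation is needed ``rather than a single-window estimate'' is backwards: since the windows $a^{-(s_j+t)}W$ are nested, the event for the largest one already implies all the others, and a single-window estimate is exactly what the paper uses. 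Second, the claim that $\widehat B'$ ``depends only on the fresh copies'' is too strong as stated: which fresh copies act inside the window is dictated by the cell enumeration of the composed tessellations, which is past-measurable. The correct formulation is conditional independence given the past on $E_t$ (the paper's conditional-law statement after (\ref{lun3}) and its multi-time event equalities), which still yields $\PP(\widehat A\cap E_t\cap\sigma_\T^t\hbox{-shifted }\widehat B)=\PP(\widehat A\cap E_t)\,q_t$ with $q_t$ not depending on $\widehat A$ --- and that is all your bookkeeping requires.
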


\smallskip

Let $\Z^d=(\Z_n: n\in \ZZ)$ be the restriction of $\Z$ to integer times 
and let $\mu^{\Z^d}$ be the law of $\Z^d$ on $\T^\ZZ$. 
Theorem \ref{station} implies that $\Z^d$ is stationary in time.
As we pointed out in Section \ref{suberg}, the stationary property can be 
stated by saying that $\mu^{\Z^d}$ is preserved by the shift transformation 
$\sigma_{\T}$.  
%Here $\sigma_{\T}: \T^\ZZ\to \T^\ZZ$, $R=(R_j:j\in 
%\ZZ)\mapsto \sigma_{\T}(R)$ with $(\sigma_{\T}(R))_n=R_{n+1}\;$ 
%$\forall n\in \ZZ$.

\medskip

Let $W$ be a window. The random sequence 
$\Z^d\wedge W=(\Z_n\wedge W: n\in \ZZ)$ 
is also stationary, so the law of 
$\Z^d\wedge W$ on $\T_W^\ZZ$, which is denoted by 
$\mu^{\Z^d}_W$, is $\sigma_{\T_W}-$invariant. 
We will give an ergodic description of the two-sided shift
$(\T_W^\ZZ, \mu_W^{\Z^d}, \sigma_{\T_W})$. 
We recall that Theorem \ref{mix22} states that this shift is mixing, 
so it is ergodic.

\bigskip

Let $\xi$ be the law of $Y_1=Z_0$, so
$\xi(B)=\PP(Y_1\in B)=\PP(Z_0\in B)$ for
$B\in \B(\T)$. Let us denote by $\xi_W$ the law of $Y_1\wedge 
W=Z_0\wedge W$, so
\begin{equation}
\label{defm}
\forall B\in \B(\T_W): \;\; \xi_W(B)=
\PP(Y_1\wedge W\in B)=\PP(Z_0\wedge W\in B).
\end{equation}
In the sequel we fix
\begin{equation}
\label{defnu}
\varrho=\xi_W^{\otimes \NN}. 
\end{equation}

The following ergodic property is verified.

\begin{theorem}
\label{bernoulli}
Let $W$ be a window. Then the shift system
$(\T_W^\ZZ, \mu^{\Z^d}_W, \sigma_{\T_W})$
is a factor of the Bernoulli shift
$((\T_W^\NN)^\ZZ, \varrho^{\otimes\ZZ},
\sigma_{\T_W^\NN})$, that is 
$\exists \, \varphi:(\T_W^\NN)^\ZZ\to \T_W^\ZZ$ 
(a factor map) measurable and defined $\varrho^{\otimes \ZZ}-$a.e., which 
satisfies,
\begin{equation}
\label{propf1}
\sigma_{\T_W}\circ \varphi=\varphi\circ \sigma_{\T_W^\NN} 
\;\;\;
\varrho^{\otimes \ZZ}-\hbox{a.e.},
\end{equation}
and
\begin{equation}
\label{propf2}
\varrho^{\otimes \ZZ}\circ \varphi^{-1}=\mu^{\Z^d}_W\,.
\end{equation}
Moreover, the factor map is finitary with null anticipation, 
that is for all $m\in \ZZ$, $\varrho^{\otimes\ZZ}-$a.e. 
in ${\bf R}=({\vec R}_n: n\in \ZZ)\in (\T_W^\NN)^\ZZ$, 
the coordinate $\varphi({\bf R})_m$ of the image point depends only 
on the finite set of coordinates $({\vec R}_n: n\in [-N,m])$ of the point
${\bf R}$. (The memory length $N$ depends on ${\bf R}$). 
\end{theorem}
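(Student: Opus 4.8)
The plan is to realize $\Z^d\wedge W$ through a backward recursion driven by independent STIT increments, and then to read off from this recursion a stationary, finite-memory code with no look-ahead.

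First I would record the two structural identities that make the renormalization compatible with iteration. The translation invariance (\ref{homlam}) gives the scaling relation $cY_s\sim Y_{s/c}$ for $c>0$ (checked on the empty-cell probabilities, cf. (\ref{primero22})), so that $\Z_s=a^sY_{a^s}$ is marginally distributed as $Y_1$. Combined with the elementary commutation $a(T\boxplus\vec R)=(aT)\boxplus(a\vec R)$ of the iteration operation with dilations, the increment relation (\ref{iterate}) applied with $t=a^{n-1}$, $s=a^n-a^{n-1}$ and then dilated by $a^n$ yields, in any window $W$ with $0\in\Int W$,
\begin{equation*}
\Z_n\wedge W \sim \big[(a\,\Z_{n-1})\boxplus \vec V_n\big]\wedge W,
\end{equation*}
where $\vec V_n$ is a sequence of independent copies of $Y_{1-1/a}$, independent of $\Z_{n-1}$, and where $a\,\Z_{n-1}\wedge W=a(\Z_{n-1}\wedge a^{-1}W)$. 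Thus one step of the recursion replaces $W$ by the strictly smaller window $a^{-1}W$ and consumes one independent packet of noise. Since $Y_{1-1/a}\wedge W\sim \tfrac{a}{a-1}\big(Y_1\wedge \tfrac{a-1}{a}W\big)$, each packet $\vec V_n$ is a measurable (restriction-and-dilation) function of a single coordinate $\vec R_n\in\T_W^\NN$ of independent $Y_1\wedge W$-distributed tessellations; this is exactly what forces the driving measure to be $\varrho=\xi_W^{\otimes\NN}$ per coordinate, the $\NN$-index supplying one independent noise tessellation for each cell of $a\,\Z_{n-1}$ entering the operation $\boxplus$.

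Next I would unroll this recursion backward along the nested windows $W\supset a^{-1}W\supset a^{-2}W\supset\cdots\to\{0\}$. Iterating $N$ times expresses $\Z_n\wedge W$ as a finite composition of $\boxplus$-operations applied to the packets $\vec V_{n-N+1},\dots,\vec V_n$, stacked on top of the seed $\Z_{n-N}\wedge a^{-N}W$. The construction terminates as soon as this seed is the trivial tessellation $\{a^{-N}W\}$: a trivial seed dilates to a trivial seed one level up, contributes no interior boundary, and then the output is determined by the finitely many packets alone. This is where Lemma \ref{fundadef} enters and is the crux of the argument: it shows that, $\varrho^{\otimes\ZZ}$-almost surely, at some finite random depth $N=N(\mathbf R)$ the contributions coming from the remote past are confined to the boundaries of the nested windows, i.e. the seed becomes trivial. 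Hence the code $\varphi(\mathbf R)_n$ is a.e. well defined and depends only on the coordinates $(\vec R_m: m\in[n-N,n])$; in particular it never looks into the future, which gives the null anticipation.

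Finally I would verify the three required properties. Stationarity of the code (the same rule applied at every site, with level indexed by relative depth) gives the equivariance (\ref{propf1}) $\sigma_{\T_W}\circ\varphi=\varphi\circ\sigma_{\T_W^\NN}$ directly. For the distributional identity (\ref{propf2}) $\varrho^{\otimes\ZZ}\circ\varphi^{-1}=\mu^{\Z^d}_W$, I would match finite-dimensional laws: the multi-time iteration relation (\ref{iterate22}), dilated and restricted to $W$ as above, reproduces the joint law of $(\Z_{n_1}\wedge W,\dots,\Z_{n_j}\wedge W)$ as precisely the output of the finite $\boxplus$-composition fed by the independent packets, so the pushforward of $\varrho^{\otimes\ZZ}$ under $\varphi$ agrees with $\mu^{\Z^d}_W$ on cylinders and hence everywhere. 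The main obstacle is the almost-sure finiteness of the memory depth $N$ (Lemma \ref{fundadef}), together with the bookkeeping needed to certify triviality of the seed from the inputs themselves; once that is in hand, the measurability, the finitary/null-anticipation property, and the verification of (\ref{propf1})--(\ref{propf2}) are comparatively routine consequences of (\ref{iterate}), (\ref{iterate22}) and the consistency of STIT under window restriction.
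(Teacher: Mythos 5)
Your construction of the code itself coincides with the paper's: the one-step relation you derive is exactly (\ref{recrel}) (your packets of copies of $Y_{1-1/a}$ are the paper's $\frac{a}{a-1}{\vec R}_n$ restricted to $W$), your backward unrolling along the nested windows $a^{-N}W$ is the paper's definition (\ref{defN}) of $\varphi^N$ combined with the coupling Lemma \ref{elemental1}, and the a.s.\ finite memory depth with null anticipation is obtained, as in the paper, from (\ref{productinf}) of Lemma \ref{fundadef}. Where you genuinely diverge is the proof of (\ref{propf2}). The paper seeds an auxiliary run $V^M$ with the true process at time $-M$ and then invokes the mixing of $\Z\wedge W$ (Theorem \ref{mix22}) and the Birkhoff theorem to find, within a bounded horizon, times at which the true process equals $\{W\}$, which forces coalescence of $V^M$ and $\varphi^M$ (events (\ref{conserg})--(\ref{int2})). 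You instead propose direct matching of finite-dimensional laws through (\ref{iterate22}) together with the vanishing influence of the seed. Your route can be made rigorous and is arguably more elementary: by (\ref{primero22}) and (\ref{limit1}) the seed-nontriviality probabilities $1-e^{-a^{-N}\Lambda([W])}$ are summable, so the true seed is a.s.\ eventually trivial inside $a^{-N}W$, and no appeal to the mixing of $\Z\wedge W$ is needed at this step.

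Two points in your write-up need repair, though both are fixable. First, an attribution error: Lemma \ref{fundadef} is a statement about the i.i.d.\ noise coordinates ${\vec R}_n$, not about the seed $\Z_{n-N}\wedge a^{-N}W$. It is the cascade event (\ref{productinf}) on the packets, fed into Lemma \ref{elemental1}, that makes the trivial-seeded runs from different depths coalesce --- hence $\varphi$ well defined and finitary --- while the eventual triviality of the true seed is a separate Borel--Cantelli fact about $Y_1$. Your phrase ``the seed becomes trivial'' conflates these two independent sources of randomness. Second, in the law matching, the exact-law representation furnished by (\ref{iterate22}) is seeded by the true tessellation, not trivially; to transfer the law to $\varphi$ you must couple the true-seeded and trivial-seeded runs on a high-probability event measurable with respect to the seed alone, which is independent of the packets. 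Conditioning instead on an event involving the packets --- such as the event of Lemma \ref{fundadef}, whose $k=0$ term constrains the very packet ${\vec R}_{-N}$ consumed by the run from depth $N$ --- would bias their law. With these two clarifications your argument closes.
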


A consequence is the following result.

\begin{corollary}
\label{orns}
Let $W$ be a window. Then
$(\T_W^\ZZ, \mu^{\Z^d}_W, \sigma_{\T_W})$
is isomorphic to a Bernoulli shift of infinite entropy.
\end{corollary}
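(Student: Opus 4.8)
The plan is to combine Theorem~\ref{bernoulli} with two classical facts from Ornstein's theory: that a factor of a Bernoulli shift is again Bernoulli, and that the Ornstein isomorphism theorem classifies Bernoulli shifts on Lebesgue spaces by their entropy. Write $S=(\T_W^\ZZ, \mu^{\Z^d}_W, \sigma_{\T_W})$ and $B=((\T_W^\NN)^\ZZ, \varrho^{\otimes\ZZ}, \sigma_{\T_W^\NN})$. Theorem~\ref{bernoulli} exhibits $S$ as a factor of the Bernoulli shift $B$, since the map $\varphi$ is a factor map pushing $\varrho^{\otimes\ZZ}$ onto $\mu^{\Z^d}_W$. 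As all the spaces involved are Lebesgue (Subsection~\ref{sbsb2}), I would first invoke the theorem that a factor of a Bernoulli shift is Bernoulli to conclude that $S$ is itself (isomorphic to) a Bernoulli shift; it then remains only to show that its entropy is infinite.

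To anchor the entropy computation I would note that the ambient shift $B$ has infinite entropy. By~(\ref{primero22}) the law $\xi_W$ of $\Z_0\wedge W=Y_1\wedge W$ charges the trivial tessellation $\{W\}$ with mass $e^{-\Lambda([W])}$, but it also has a non-atomic part arising from the continuously distributed positions of the cutting hyperplanes that meet $\Int W$. Hence $\xi_W$, and therefore $\varrho=\xi_W^{\otimes\NN}$, has a non-atomic part, so by the entropy formula recalled in Subsection~\ref{suberg} we get $H(\varrho)=\infty$ and $h(\sigma_{\T_W^\NN},\varrho^{\otimes\ZZ})=\infty$.

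The core step is the lower bound $h(\sigma_{\T_W}, \mu^{\Z^d}_W)=\infty$, which cannot be read off from $B$ because factors only decrease entropy. I would argue with conditional entropies. For a finite measurable partition $\beta$ of $\T_W$, seen through the $0$-coordinate, Kolmogorov--Sinai theory gives $h(\sigma_{\T_W}, \mu^{\Z^d}_W)\ge h(\sigma_{\T_W},\beta)=H(\beta\mid \beta^-)$, where $\beta^-$ is the $\sigma$-field generated by the $\beta$-labels of the strictly past coordinates. Since the full past $\mathcal P=\sigma(\Z_{-k}\wedge W: k\ge 1)$ refines $\beta^-$, conditioning on it can only decrease entropy, so $H(\beta\mid \beta^-)\ge H(\beta\mid \mathcal P)$. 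Thus it suffices to prove $\sup_\beta H(\beta\mid \mathcal P)=\infty$, for which I would show that the conditional law $\mathcal L(\Z_0\wedge W\mid \mathcal P)$ has a non-atomic part on an event of positive probability: refining $\beta$ so as to split that non-atomic part into many pieces of small conditional mass then makes $H(\beta\mid \mathcal P)$ as large as we please. The non-atomicity comes from the iteration relation~(\ref{iterate}): since $\Z_0=Y_1$ and $\mathcal P$ is a function of the process up to time $1/a$, conditioning on $\mathcal P$ leaves free the fresh cuts produced in the time interval $(1/a,1]$; these are independent of the past and continuously distributed, and with positive probability at least one of them meets $\Int W$, so $\Z_0\wedge W=Y_1\wedge W$ retains a non-atomic conditional component. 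This gives $h(\sigma_{\T_W}, \mu^{\Z^d}_W)=\infty$, and combined with the previous steps shows that $S$ is a Bernoulli shift of infinite entropy, which is the assertion; by the Ornstein isomorphism theorem it is moreover isomorphic to $B$.

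The main obstacle is precisely this entropy lower bound, i.e. making rigorous that the conditional law of $\Z_0\wedge W$ given the whole past has a genuinely non-atomic component. The delicate point is the window restriction: the past coordinates $\Z_{-k}\wedge W$ encode $Y$ on the enlarged windows $a^k W$ at the earlier times $a^{-k}$, so one must check that conditioning on all of this does not already determine the cuts added inside $\Int W$ between times $1/a$ and $1$. The independent-increment structure~(\ref{iterate}), together with the fact that the added hyperplanes have a continuous law under $\Lambda_W$ and fall in $\Int W$ with positive probability, is exactly what supplies the needed independence; the appeal to ``a factor of a Bernoulli shift is Bernoulli'' in the infinite-entropy regime is standard but should be cited with care from Ornstein's theory.
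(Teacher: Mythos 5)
Your proposal is correct and takes essentially the same route as the paper: Ornstein's theorem that a factor of a Bernoulli shift on a Lebesgue space is Bernoulli, plus an infinite-entropy bound coming from the non-atomicity of the one-step conditional law of $\Z_0\wedge W$ given the past, which the paper derives via the Markov entropy formula $h(\sigma_{\T_W},\mu^{\Z^d}_W)=\int_{\T_W} H(\kappa_T)\,d\xi_W(T)$ and the observation that $\kappa_T$ has $\{aT\wedge W\}$ as its unique atom with mass strictly between $0$ and $1$. Your partition-based bound $h\ge H(\beta\mid\mathcal{P})$ combined with the continuously distributed fresh cuts from the independent-increments relation is a hands-on version of the same computation, so the two arguments agree in substance.
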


Let us give the steps for its proof. In Subsection \ref{sub333} we 
will show that the shift $(\T_W^\ZZ, \mu^{\Z^d}_W, \sigma_{\T_W})$ has 
infinite entropy. 
On the other hand $((\T_W^\NN)^\ZZ, 
\B((\T_W^\NN)^\ZZ),\varrho^{\otimes\ZZ})$
is a Lebesgue probability space. Then the proof that 
$(\T_W^\ZZ, \mu^{\Z^d}_W, \sigma_{\T_W})$ is isomorphic to a 
Bernoulli shift follows from Theorem $6$, page $54$ in \cite{orns2} 
(see also \cite{orns1}) because there it was shown that
a factor of a Bernoulli shift defined on a Lebesgue probability space 
is isomorphic to a Bernoulli shift.

\begin{corollary}
\label{ornsflow}
Let $W$ be a window. Then $\Z\wedge W$ is a Bernoulli flow 
of infinite entropy that
is isomorphic to any other Bernoulli flow of infinite entropy 
defined on a Lebesgue probability space.
\end{corollary}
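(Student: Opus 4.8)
The plan is to deduce both assertions from the definition of a Bernoulli flow together with the discrete results already in hand. Recall that a Bernoulli flow is a flow whose time-one map is isomorphic to a Bernoulli shift, and that the isomorphism theorem for Bernoulli flows (Theorem $4$, Section $12$, part $2$ in \cite{orns2}) reduces the second assertion to the first: once $\Z\wedge W$ is shown to be a Bernoulli flow of infinite entropy defined on a Lebesgue space, that theorem yields isomorphism with every other infinite-entropy Bernoulli flow over a Lebesgue space. The Lebesgue property is immediate, since the trajectory law $\mu^\Z_W$ lives on the Borel subset $D_{\T_W}(\RR)$ of the Polish space $D_{\overline{\T_W}}(\RR)$, and the completion of a probability measure on a Borel subset of a Polish space is Lebesgue (Subsection \ref{sbsb2}). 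So everything comes down to proving that the time-one map $(D_{\T_W}(\RR),\mu^\Z_W,\sigma_\T^1)$ is isomorphic to a Bernoulli shift of infinite entropy.

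For the entropy I would use monotonicity under factors. Sampling a c\`adlag trajectory at integer times, $\omega\mapsto(\omega(n):n\in\ZZ)$, is a measurable map $D_{\T_W}(\RR)\to\T_W^\ZZ$ that intertwines $\sigma_\T^1$ with $\sigma_{\T_W}$ and carries $\mu^\Z_W$ to $\mu^{\Z^d}_W$; hence the discrete skeleton $(\T_W^\ZZ,\mu^{\Z^d}_W,\sigma_{\T_W})$ is a factor of the time-one map. By Corollary \ref{orns} the skeleton has infinite entropy, and since entropy cannot increase under a factor, the time-one map has infinite entropy as well.

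The substantive point is the Bernoulli property of the time-one map, and for this I would exhibit it as a factor of a Bernoulli shift on a Lebesgue space and then invoke Theorem $6$, page $54$ in \cite{orns2}, by which a factor of a Bernoulli shift defined on a Lebesgue space is itself Bernoulli. To build such a factor I would upgrade the coding of Theorem \ref{bernoulli} from integer-time values to whole trajectories. In that theorem the Bernoulli symbol at site $n$ is a sequence $\vec R_n\in\T_W^\NN$ of independent copies of $Y_1\wedge W=\Z_0\wedge W$, and the reconstruction of $\Z_n\wedge W$ proceeds by the iteration operation $\boxplus$ encoded in the independence relation (\ref{iterate}). The continuous-time analogue replaces each symbol by an independent copy of the entire c\`adlag path of the process over one renormalized unit of time, so that the full trajectory $\Z\wedge W$ can be rebuilt segment by segment. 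Concretely, one views $D_{\T_W}(\RR)$ through its decomposition into the path segments on the intervals $[n,n+1)$, under which $\sigma_\T^1$ becomes the shift on the segment sequence, and then produces a measurable, shift-commuting map from an i.i.d.\ source onto this segment process whose image law is $\mu^\Z_W$, using the process-level form of the iteration relations (\ref{iterate})--(\ref{iterate22}).

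The hard part will be precisely this last construction: establishing the path-level version of the iteration, so that not merely the integer-time marginals but the complete c\`adlag trajectory of $\Z\wedge W$ is a deterministic function of an independent sequence of path-valued random variables, and checking that the resulting reconstruction map is Borel measurable for the Skorohod structure, commutes with the shifts, and pushes the product measure forward to $\mu^\Z_W$. Measurability of the segmentation at the deterministic cut times $n\in\ZZ$ and the interplay between the Skorohod topology and the product-of-segments identification require care, but I expect no essentially new mechanism beyond the finitary machinery already developed for Theorem \ref{bernoulli}. Once the time-one map is displayed as a factor of a Bernoulli shift over a Lebesgue space it is Bernoulli; together with the infinite-entropy computation this shows $\Z\wedge W$ is a Bernoulli flow of infinite entropy, and the flow isomorphism theorem then completes the proof.
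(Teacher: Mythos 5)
Your proposal follows the same route as the paper: both reduce everything, via the isomorphism theorem for Bernoulli flows (Theorem 4, Section 12, part 2 in \cite{orns2}), to two statements --- that the trajectory space is a Lebesgue probability space and that the time-one map is a Bernoulli shift of infinite entropy. The instructive difference lies in how the middle step is handled. The paper's proof is deliberately terse: it works on the Polish space $D_{\overline{\T_W}}(\RR)$ --- thereby getting the Lebesgue property directly, without your unverified claim that $D_{\T_W}(\RR)$ is a Borel subset of it --- and then simply asserts that Corollary \ref{orns} makes $(D_{\overline{\T_W}}(\RR),\B(D_{\overline{\T_W}}),\mu^\Z,\sigma_\T^1)$ a Bernoulli shift of infinite entropy. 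You are right that this assertion is not literally immediate: Corollary \ref{orns} concerns the integer skeleton $(\T_W^\ZZ,\mu^{\Z^d}_W,\sigma_{\T_W})$, which is only a proper factor of the time-one map (the jump times inside an interval $[n,n+1)$ are not recoverable from the integer samples $\Z_n\wedge W$), and Bernoullicity does not in general pass from a factor to an extension. Your entropy argument via factor monotonicity is correct and is, if anything, more explicit than what the paper writes. Your plan for the Bernoulli property --- recode whole path segments from an i.i.d.\ path-valued source using the path-level form of the iteration relations (\ref{iterate})--(\ref{iterate22}) together with the finitary machinery of Theorem \ref{bernoulli} and Lemma \ref{fundadef}, then invoke Ornstein's theorem that factors of Bernoulli shifts on Lebesgue spaces are Bernoulli --- is exactly what a complete justification of the paper's one-line claim would look like; but you leave that construction, which is the entire content of the step (a.e.\ convergence of the segment reconstruction, Skorohod measurability, and the identity of the pushforward with $\mu^\Z_W$), unexecuted. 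In short: same approach as the paper, with your version making explicit, though not closing, the gap that the paper's proof elides.
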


The proof is as follows. We have that 
$(D_{\overline{\T_W}}(\RR), \B(D_{\overline{\T_W}}), \mu^\Z,
(\sigma_\T^t: t\! \in \! \RR))$ is a flow and from Corollary \ref{orns}
it follows that
$(D_{\overline{\T_W}}(\RR), \B(D_{\overline{\T_W}}), \mu^\Z, \sigma_\T^1)$
is a Bernoulli shift of infinite entropy. Since $(D_{\overline{\T_W}}(\RR),
\B(D_{\overline{\T_W}}), \mu^\Z)$ is a Lebesgue probability space, 
Theorem $4$ in Section $12$, part $2$
in \cite{orns2} gives the result.

\medskip

Observe that all the results in relation with
$\Z^d$ are also true when instead of the discrete time process 
$(\Z_n\wedge W: n\in \ZZ)$ we
consider $(\Z_{h n}\wedge W: n\in \ZZ)$ for a fixed positive real $h$.
In fact this last process corresponds to the former one when 
$a^h$ is used instead of $a$.

\section{ Proof of the Main Results}
\label{sec2}

We recall that without loss of generality we can assume that 
window $W$ contains the origin in its interior, $0\in \Int (W)$.
Also we can assume that $0$ belongs to the interior of $0-$cell
during all the tessellation process $Y$.

\subsection{Proof of Theorem \ref{station}}
Let us first note that since the space $(\F,d_\F)$ is a Polish space
and $\T$ is a Borel subset of $\F$, a probability measure on the
product space $(\T^\RR, {\widehat \B}(\T)^{\otimes \RR})$ is defined by
the finite dimensional distributions verifying the consistency
property (see \cite{ne}, Corollary Section III.3).

\medskip

Hence, to show that $\Z=(\Z_s: s\in \RR)$ is stationary it suffices to 
prove that the finite dimensional distributions are stationary. So, we 
must show that
\begin{eqnarray}
&{}&\forall t>0 \,,\; \forall s_1<...<s_n \; 
\forall B_1,...,B_n\in \B(\T): 
\nonumber \\
&{}& \PP(Z_{s_i+t}\in B_i: i=1,...,n)=
\PP(Z_{s_i}\in B_i: i=1,...,n)\,.
\label{nev1}
\end{eqnarray}

Let us do it. Since $(Y_t: t > 0)$ is a Markov process, so is $(\Z_s: s\in \RR)$. 
On the other hand it was shown in Lemma $5$ in \cite{nw} that
\begin{equation}
\label{homothet}
t Y_t \sim Y_1  \ \mbox{ for all }t>0\,,
\end{equation}
and hence all 1-dimensional distributions of
$(\Z_s: s\in \RR)$ are identical.
Therefore the proof of (\ref{nev1}) will be finished once we show 
that the transition probabilities
from $\Z_s$ to $\Z_{s+t}$ depend only on the time difference $t>0$.
Now, from (\ref{homothet}) and (\ref{iterate}) we get that for all
$z\in \T$ and all measurable $B\in \B(\T)$ it is satisfied,
\begin{eqnarray*}
\PP \left( \Z_{s+t} \in B \, | \, \Z_s\!=\!z \right)
&=& \PP\left(a^{s+t} Y_{a^{s+t}}\in B \,|\, 
Y_{a^s}\!=\!a^{-s}z\right)\\
&=& \PP \left(z  \boxplus a^s {\vec Y}'_{a^{s+t}-a^s} \in a^{-t}B 
\right)
\!=\! \PP \left(z \boxplus {\vec Y}'_{a^t-1} \in a^{-t}B \right).
\end{eqnarray*}
So the stationary property holds.

\subsection{Proof of Theorem \ref{mix22}}

The process $\Z\wedge W$ takes values in the separable metric
space $D_{\T_W}(\RR)$ which is endowed with its 
Borel $\sigma-$field $\B(D_{\T_W})$. As we pointed out in Section 
\ref{Sub1.2}, since $\T_W$ is separable then the class of cylinders in 
$D_{\T_W}(\RR)$ is a semi-algebra generating 
$\B(D_{\T_W})$. From the Carath\'eodory theorem on exterior 
measures we get that for all sets $E\in \B(D_{\T_W})$ and all
$\epsilon>0\;$ exists $E'\in \B(D_{\T_W})$
which is a disjoint union of a finite number of cylinders such
that $\PP(E\Delta E')<\epsilon$.
Therefore it suffices to show the stationary and the mixing property
for the cylinders in $D_{\T_W}(\RR)$. 
Hence, the above proof of Theorem \ref{station} also shows that 
$\Z\wedge W$ is a stationary Markov process when considered 
in $D_{\T_W}(\RR)$.

\medskip

Let us prove (\ref{mix0}). From the above discussion it suffices to 
prove it for cylinders all ${\widehat A}$ and ${\widehat B}$ such that
$\PP (\Z\wedge W  \in {\widehat A})>0$ and 
$\PP (\Z\wedge W  \in {\widehat B})>0$. 
So let
$$
{\widehat A}=\{\Z_{s_1}\in A_1,\ldots,\Z_{s_j}\in A_j\} \hbox{ and } 
{\widehat B}=\{\Z_{u_1}\in B_1,\ldots,\Z_{u_l}\in B_l\}
$$ 
be such that $s_1<...<s_j$, $u_1<...<u_l$ in $\RR$,
$A_1,...,A_j, B_1,...,B_l\in \B(\T\land W)$ and
$\PP(\Z_0\wedge W\in A_p)>0$, $\PP(\Z_0\wedge W\in B_q)>0$
for $p=1,...,j$, $q=1,...,l$. Note that by time 
invariance property shown in Theorem \ref{station} and since in 
(\ref{mix0}) time $t\to \infty$, we can assume $s_j=u_1=0$.

\medskip

First, let us show (\ref{mix0}) in the case $j=l=1$. 
%By time translation invariance (part $(i)$) 
It suffices to show 
that for all $ A\,,B\in \B(\T\land W)$ which satisfy 
$\PP (\Z_0 \wedge W \in A)>0$, $\PP (\Z_0 \wedge W \in B)>0$,
it is fulfilled
\begin{equation}
\label{mix}
\lim\limits_{t\to \infty} 
\PP (\Z_0 \wedge W  \in A, \Z_{t} \wedge W \in B) =
\PP (\Z_0 \wedge W \in A) \PP (\Z_0 \wedge W \in B)\,.
\end{equation}

Let us consider the events 
$\{\partial Y_1\cap \, \Int(a^{-t} W)=\emptyset\}$ with $t\in \RR$. 
This family of sets increases with $t$ and when $t\to \infty$ it 
converges to the set
$\bigcup\limits_{m\in \NN} \{\partial Y_1\cap \, \Int(a^{-m} 
W)=\emptyset\} =\{\partial Y_1\cap \{0\}=\emptyset\}$.
So
\begin{equation}
\label{limit1}
\lim_{t\to \infty} \PP(\partial Y_1\cap \, \Int ( a^{-t} W)=\emptyset) 
= \PP(\partial Y_1\cap \{0\}=\emptyset) = 1.
\end{equation}

For $t>0$
\begin{eqnarray}
\nonumber
&{}& \PP (\Z_0 \wedge W \in A , \Z_{t} \wedge W \in B)
= \PP (Y_{1} \wedge W \in A , a^t Y_{a^{t}} \wedge W  \in B)\\  
\nonumber
&{}& \; = \PP (Y_{1} \wedge W \in A ,
\partial Y_{1}\cap \Int (a^{-t} W) \!\neq\!\emptyset,
a^t Y_{a^{t}} \wedge W \in B )\\  
\label{lun1}
&{}& \;\;\;\; +\, \PP ( Y_{1} \wedge W \in A ,
\partial Y_{1}\cap \Int (a^{-t} W) \!=\!\emptyset, 
a^t Y_{a^{t}} \wedge W \in B).
\end{eqnarray}

From (\ref{limit1}) the first item converges to $0$ for $t\to \infty$, 
in fact
\begin{eqnarray}
\nonumber
&{}& \lim\limits_{t\to \infty}\PP ( Y_{1} \wedge W \in A, 
\partial Y_{1}\cap \Int (a^{-t} W) \neq \emptyset, 
a^tY_{a^{t}} \wedge W \in B)\\
\label{lun2}
&{}& \; \le \lim\limits_{t\to \infty}
\PP (\partial Y_{1}\cap \Int (a^{-t} W) \neq\emptyset)=0.
\end{eqnarray}

Let us now turn to the analysis of the second item. The assumption
$\PP (\Z_0 \wedge W \in A)>0$ and (\ref{limit1}) imply  for sufficiently large
$t>0$ that
$\PP ( Y_{1} \wedge W \in A ,
\partial Y_{1}\cap \Int (a^{-t} W) \!=\!\emptyset )>0$. Thus
\begin{eqnarray*}
&{}& \; \;\;  \PP ( Y_{1} \wedge W \in A ,
\partial Y_{1}\cap \Int (a^{-t} W) \!=\!\emptyset, 
a^t Y_{a^{t}} \wedge W \in B)\\
&{}& \; = \PP (  a^t Y_{1+(a^{t}-1)}
\wedge W \in B \; | \; Y_{1} \wedge W \in A ,
\partial(a^t Y_{1})\cap \Int(W) \!=\!\emptyset )\times \\
&{}& \; \;\;\; \times 
\PP (Y_{1} \wedge W \in A , 
\partial Y_{1}\cap \Int(a^{-t} W) \!=\!\emptyset ).
\end{eqnarray*}
Conditioned on $\partial(a^t Y_{1})\cap \Int(W) \!=\!\emptyset$, the 
Markov property 
and the consistency of the construction (described in Subsection \ref{Sub1.2}) 
yield that $ a^t Y_{1+(a^{t}-1)} \wedge W$ 
is distributed as $ a^t Y_{a^{t}-1} \wedge W$, that is
$$
\PP( a^t Y_{1+(a^{t}-1)} \wedge W \!\in \!B \, | \, 
Y_{1} \wedge W \! \in \! A,
\partial(a^t Y_{1})\cap \Int(W) \!=\!\emptyset )
\!=\! \PP(a^t Y_{a^{t}-1} \wedge W \in B) .
$$
Hence 
\begin{eqnarray}
\nonumber
&{}& \; \;\;  \PP ( Y_{1} \wedge W \in A ,
\partial Y_{1}\cap \Int(a^{-t} W) \!=\!\emptyset, 
a^t Y_{a^{t}} \wedge W \in B)\\
\label{lun3}
&{}& \; = \PP(Y_{1} \wedge W \in A, 
\partial Y_{1}\cap \Int (a^{-t} W) \!=\!\emptyset )
\ \PP ( a^t Y_{a^{t}-1} \wedge W \in B ).
\end{eqnarray}

Note that (\ref{limit1}) also yields
\begin{equation}
\label{lun4}
\lim_{t\to \infty} \PP(Y_{1} \!\wedge\! W \!\in \!A,
\partial Y_{1}\cap \Int(a^{-t}W)\!=\!\emptyset ) \!=\!
\PP(Y_{1} \!\wedge W \!\in A) \!=\!\PP(\Z_0 \!\wedge \!W \!\in \!A).
\end{equation}
Therefore, from the relations 
(\ref{lun1}),(\ref{lun2}), (\ref{lun3}) and (\ref{lun4}), we get 
that the result will be proven once we show
\begin{equation}
\label{mar40}
\lim\limits_{t\to \infty}
\PP (a^t Y_{a^t-1}\wedge W \in B)=\PP(Y_1\wedge W \in B).
\end{equation}
From (\ref{homothet}) we have $\PP (a^t Y_{a^t-1}\wedge W \in B)
=\PP(Y_{1-a^{-t}} \wedge W \in B )$ and so it suffices to show,
\begin{equation}
\label{lun5}
\lim\limits_{t\to \infty} \PP(Y_{1-a^{-t}} \wedge W \in B )=
\PP(Y_1\wedge W \in B).
\end{equation}

For $k\in \NN$ and $t>0$ define the events
\begin{equation}
\label{defdd}
D_{k,t}=\{\partial Y_{a^{-t}}^{'m} \cap \, \Int (W) =\emptyset \;\, 
\forall \, m\in \{ 1,..., k\}\} 
\end{equation}
with ${\vec Y}'$ as introduced in Subsection \ref{indincr}.
Notice that for any fixed $k$ the events are monotonically increasing 
in $t$ because due to the construction of the process the sets 
$Y_{a^{-t}}^{'m}$ are 
decreasing in $t$. Moreover, from (\ref{limit1}) we get
\begin{equation}
\label{limB}
\lim_{t\to \infty } \PP (D_{k,t})= \lim_{t\to \infty} 
\PP(\partial Y_1\cap \, \Int(a^{-t} W)=\emptyset)^k =1 .
\end{equation}

Further, recall that $|Y\cap W|$ denotes the number of cells of $Y\cap W$. 
We have the following decomposition,
\begin{eqnarray}
\nonumber
\PP(Y_{1-a^{-t}} \wedge \! W \!\in \!B )
\!&\!=\sum\limits_{k\in \NN} 
\PP (Y_{1-a^{-t}}\! \wedge \! W \!\in\! B, 
|Y_{1-a^{-t}}\! \cap W|\!=\!k, D_{k,t}^c ) \\
\label{mar41}
\!& \!+\sum\limits_{k\in \NN} \PP (Y_{1-a^{-t}}\!\wedge\!W \!\in\! B, 
|Y_{1-a^{-t}}\!\cap W|\!=\!k, D_{k,t} ). 
\end{eqnarray}

Let us analyze the first sum in (\ref{mar41}). From (\ref{homlam}) we get
$$
\PP(D_{k,t})=e^{-a^{-t}\Lambda ([W])k}.
$$
Then, by independence between $Y$ and $\{D_{k,t}: k\in \NN\}$ and 
by using that $|Y_{s}\cap W|$ increases 
with $s$ we obtain,
\begin{eqnarray*}
&{}&  \sum\limits_{k\in \NN}
\PP(|Y_{1-a^{-t}}\cap W|=k, D_{k,t}^c )= \sum\limits_{k\in \NN}
\PP (|Y_{1-a^{-t}}\cap W|=k) \, \PP (D_{k,t}^c ) \\
&{}&  \le \sum\limits_{k\in \NN}
\PP (|Y_{1-a^{-t}}\cap W|=k)\, 
\left(1-e^{-a^{-t}\Lambda ([W])k}\right) \\
&{}& = \EE\left(1-e^{-a^{-t}\Lambda([W]) 
|Y_{1-a^{-t}}\cap W|}\right)\le 
\EE\left(1-e^{-a^{-t}\Lambda([W])|Y_{1}\cap W|}\right). 
\\
\end{eqnarray*}
Since the term $\left(1-e^{-a^{-t}\Lambda ([W])\, 
|Y_{1}\cap W|}\right)$ is dominated by $1$
and it decreases with $t$, the Monotone Convergence Theorem gives
$$
\lim_{t\to \infty} \EE\left(1-e^{-a^{-t}\Lambda ([W])\, 
|Y_{1}\cap W|}\right) =\EE \left(\lim_{t\to \infty}
\left(1-e^{-a^{-t}\Lambda([W])\, |Y_{1}\cap W|}\right)\right)\!=\!0 .
$$
We have shown that
\begin{equation}
\label{mar42}
\lim_{t\to \infty} \sum\limits_{k\in \NN}  
\PP(|Y_{1-a^{-t}}\cap W|=k , \ D_{k,t}^c )=0 \,.
\end{equation}
Then,
$$
\lim_{t\to \infty} \sum\limits_{k\in \NN}
\PP (Y_{1-a^{-t}} \wedge W \in B ,\
|Y_{1-a^{-t}}\cap W|=k , \ D_{k,t}^c )=0 \,.
$$

Let us turn to the second term in (\ref{mar41}).
With an appropriate measurable numbering of the cells of $Y_{1-a^{-t}}$, 
we get the inclusion of events
$$
\{|Y_{1-a^{-t}}\cap W|=k \} \cap  D_{k,t} \subseteq 
\{ Y_{1-a^{-t}}\boxplus {\vec Y}'_{a^{-t}} = Y_{1-a^{-t}} \},
$$
and so,
\begin{eqnarray*}
&{}& \{|Y_{1-a^{-t}}\cap W|=k\} \cap D_{k,t}\\
&{}& =\{Y_{1-a^{-t}}\boxplus {\vec Y}'_{a^{-t}} = Y_{1-a^{-t}} \} 
\cap \{ |Y_{1-a^{-t}}\cap W|=k \} \cap D_{k,t} .
\end{eqnarray*}

This yields
\begin{eqnarray*}
&{}& \PP (Y_{1-a^{-t}}
\wedge W \in B ,\ |Y_{1-a^{-t}}\cap W|=k, \ D_{k,t} ) \\
&{}& =\PP ((Y_{1-a^{-t}}\boxplus {\vec Y}'_{a^{-t}}) \wedge W \in B,
|Y_{1-a^{-t}}\cap W|=k, \ D_{k,t} ) .
\end{eqnarray*}
We have
\begin{eqnarray*}
&{}& \PP ((Y_{1-a^{-t}}\boxplus {\vec Y}'_{a^{-t}}) \wedge W \in B ,\
|Y_{1-a^{-t}}\cap W|=k, \ D_{k,t} ) \\
&{}& = \PP ((Y_{1-a^{-t}}\boxplus {\vec Y}'_{a^{-t}}) \wedge W \in B ,\
|Y_{1-a^{-t}}\cap W|=k)\\
&{}& \;\;\; -\, \PP ((Y_{1-a^{-t}}\boxplus {\vec Y}'_{a^{-t}}) 
\wedge W \in B, |Y_{1-a^{-t}}\cap W|=k, D_{k,t}^c) .
\end{eqnarray*}
By summing this equality over $k\in \NN$ and by using that the family of 
events $\left(|Y_{1-a^{-t}}\cap W|=k: k\in \NN \right)$ 
is disjoint and covers the whole space we obtain,
\begin{eqnarray*}
&{}&
\sum\limits_{k\in \NN} \PP ((Y_{1-a^{-t}}\boxplus {\vec Y}'_{a^{-t}}) \wedge W
\in B ,\ |Y_{1-a^{-t}}\cap W|=k, \ D_{k,t} ) \\
&{}&= \PP ((Y_{1-a^{-t}}\boxplus {\vec Y}'_{a^{-t}}) \wedge W \in B) \\
&{}& \;\;\; -\, 
\sum\limits_{k\in \NN} \PP((Y_{1-a^{-t}}\boxplus {\vec Y}'_{a^{-t}})
\wedge W \in B ,\ |Y_{1-a^{-t}}\cap W|=k, \  D_{k,t}^c ) .
\end{eqnarray*}
Since $Y_1 \sim  (Y_{1-a^{-t}}\boxplus \vec{Y}'_{a^{-t}})$, also
\begin{eqnarray*}
&{}& \sum\limits_{k\in \NN} \PP ((Y_{1-a^{-t}}\boxplus
{\vec Y}'_{a^{-t}}) \wedge W \!\in \! B,
|Y_{1-a^{-t}}\cap W|\!=\!k, D_{k,t} )\\ 
&{}& = \PP (Y_1 \wedge W \in B)\\
&{}& \;\;\; -\, \sum\limits_{k\in \NN} 
\PP((Y_{1-a^{-t}}\boxplus {\vec Y}'_{a^{-t}})
\wedge W \!\in \!B ,\ |Y_{1-a^{-t}}\cap W|\!=\!k, D_{k,t}^c ) .
\end{eqnarray*}

From (\ref{mar42}) we get,
$$
\lim_{t\to \infty} \, \sum\limits_{k\in \NN}
\PP ((Y_{1-a^{-t}} \boxplus {\vec Y}'_{a^{-t}}) \wedge W \in B ,\
|Y_{1-a^{-t}}\cap W|=k , \ D_{k,t}^c )=0 \,.
$$

Hence
$$
\lim\limits_{t\to \infty} \sum\limits_{k\in \NN}
\PP ((Y_{1-a^{-t}}\boxplus {\vec Y}'_{a^{-t}}) \wedge W \in B ,
|Y_{1-a^{-t}}\cap W|\!=\! k, D_{k,t} ) \\
\!=\!\PP (Y_1 \wedge W \!\in \! B ).
$$
Thus we have shown
$\lim\limits_{t\to \infty}\PP(Y_{1-a^{-t}} \wedge \! W \!\in \!B )
=\PP(Y_{1} \wedge W \in B)$ and  (\ref{mar40}) is verified.
The proof of Theorem \ref{station} in the case $j=l=1$ is complete. 

\medskip

Let us now show the general case $j>0$, $l>0$, $s_1<...<s_j=0$ and
$u_1=0<...<u_l$ in $\RR$, and $A_1,\ldots,A_j, B_1,\ldots,B_l\in 
\B(\T_W)$. 
Since the proof is entirely similar to the case $j=l=1$ we will only give 
the main steps. We put 
$$
\Z[s_1,\ldots,0]\wedge W\in [A_1,\ldots,A_j]:=
\{\Z_{s_1}\wedge W \in A_1,\ldots,\Z_0\wedge W \in A_j\}.
$$ 
The same notation is used for $Y$. We must prove
\begin{eqnarray*}
&{}&
\lim\limits_{t\to \infty} 
\PP (\Z[s_1,\ldots,0]\wedge W\!\in \! [A_1,\ldots,A_j], \Z[t,\ldots,t+u_l] 
\wedge W \!\in \! [B_1,\ldots,B_l]) \\
&{}&\;
=\PP (\Z[s_1,\ldots,0]\wedge W \!\in \! [A_1,\ldots,A_j]) 
\PP(\Z[0,\ldots,u_l] \wedge W \!\in \! [B_1,\ldots,B_l])\,.
\end{eqnarray*}

Let $t>0$. We have
\begin{eqnarray*}
&{}& \{\Z[s_1,..,0]\wedge W\in [A_1,..,A_j], \Z[t,..,t+u_l] \wedge W
\! \in \! [B_1,..,B_l]\}\\
&{}& = \{Y[a^{s_1},..,1]\wedge W\! \in \! [a^{-s_1} A_1,..,A_j],
a^{t} Y[a^t,..,a^{t+u_l}] \wedge W \! \in \! [B_1,..,a^{-u_l}B_l]\}.
\end{eqnarray*}
 By using (\ref{limit1}) and the same arguments as those we used from 
(\ref{lun1}) to (\ref{lun3}) we get,
\begin{eqnarray*}
&{}& \lim\limits_{t\to \infty}
\PP(\Z[s_1,..,0]\wedge W\in [A_1,..,A_j], \Z[t,..,t+u_l] 
\wedge W \! \in \! [B_1,..,B_l])\\
&{}& \; = \lim\limits_{t\to \infty}
\PP (Y[a^{s_1},\ldots,1]\wedge W\in [a^{-s_1}A_1,\ldots,A_j],
\partial Y_{1}\cap \Int (a^{-t} W)=\emptyset,\\
&{}& \; \quad\quad\quad\;\; 
a^{t} Y[a^t,\ldots,a^{t+u_l}] \wedge W \in [B_1,\ldots,a^{-u_l}B_l])\\
&{}& \; = \PP (Y[a^{s_1},\ldots,1]\wedge W\in 
[a^{-s_1}A_1,\ldots,A_j])\times \\
&{}& \quad\quad \times \lim\limits_{t\to \infty}\PP(a^t
Y[a^t-1,\ldots,a^{t+u_l}-1] \wedge W  \in [B_1,\ldots,a^{-u_l}B_l]).
\end{eqnarray*}
Since (\ref{homothet}) implies 
\begin{eqnarray*}
&{}& 
\PP (a^{t} Y[a^t-1,\ldots,a^{t+u_l}-1] \wedge W  \in 
[B_1,\ldots,a^{-u_l}B_l])\\
&{}& \; =\PP(Y[1-a^{-t},\ldots,a^{u_l}-a^{-t}] \wedge W \in 
[B_1,\ldots,a^{-u_l}B_l]),
\end{eqnarray*}
the result will be proven once we show
\begin{eqnarray}
\nonumber
&{}& \lim\limits_{t\to \infty}
\PP(Y[1-a^{-t},\ldots,a^{u_l}-a^{-t}] \wedge W  \in 
[B_1,\ldots,a^{-u_l}B_l])\\
&{}& \; =\PP(Y[1,\ldots,a^{u_l}] \wedge W  
\in [B_1,\ldots,a^{-u_l}B_l]).
\label{mar40*}
\end{eqnarray}

Now, let ${\vec Y}^{'(i)}$, $i=1,...,l$ be $l$ independent 
copies of ${\vec Y}'$, which are also independent of $Y$. Even if 
$\boxplus$ is not associative for sequences 
of tessellations we use 
$Y_{v_0}\boxplus_{i=2}^{l} {\vec Y}^{'(i)}_{v_i}$ to mean 
$\left(.. (Y_{v_0} \boxplus  {\vec Y}^{'(2)}_{v_2})\boxplus..)
\boxplus {\vec Y}^{'(l)}_{v_l} \right)$. 
From the construction in Lemma $2$ in \cite{nw}, see (\ref{iterate22}), 
we have,
\begin{eqnarray*}
&{}&\PP(Y[1\!-\!a^{-t},a^{u_2}\!-\!a^{-t},\ldots,a^{u_l}\!-\!a^{-t}]
\wedge W  \!\in \![B_1,a^{-u_2}B_2,\ldots,a^{-u_l}B_l],\\ 
&{}& \quad\quad\quad\quad |Y_{1-a^{-t}} \cap W|=k)\\
&{}& \; =\!
\PP(Y_{1\!-\!a^{-t}}\wedge W\!\in \! B_1, 
Y_{1\!-\!a^{-t}}
\boxplus{\vec Y}^{'(2)}_{a^{u_2}\!-\!1}\wedge W\!\in \! a^{-u_2} B_2,
\ldots, \\
&{}& \quad\quad\quad
Y_{1\!-\!a^{-t}}\boxplus_{i=2}^{l} {\vec Y}^{'(i)}_{a^{u_i}\!-\!a^{u_{i-1}}}
\wedge W\!\in \! a^{-u_l}B_l, |Y_{1-a^{-t}}\cap W|\!=\!k)\,.
\end{eqnarray*}
This yields
\begin{eqnarray*}
&{}& \PP(Y[1\!-\!a^{-t},a^{u_2}\!-\!a^{-t},\ldots,a^{u_l}\!-\!a^{-t}]
\wedge W  \!\in \![B_1,a^{-u_2}B_2,\ldots,a^{-u_l}B_l])\\
&{}& \; =\!\sum\limits_{k\in \NN}
\PP(Y_{1\!-\!a^{-t}}\wedge W\!\in \! B_1,
Y_{1\!-\!a^{-t}}
\boxplus{\vec Y}^{'(2)}_{a^{u_2}\!-\!1}\wedge W\!\in \! a^{-u_2} 
B_2,\ldots,\\
&{}& \quad\quad\quad
Y_{1\!-\!a^{-t}}\boxplus_{i=2}^{l} {\vec Y}^{'(i)}_{a^{u_i}\!-\!a^{u_{i-1}}}
\wedge W\!\in \! a^{-u_l}B_l, |Y_{1-a^{-t}}\cap W|\!=\!k)\,.
\end{eqnarray*}

Now we use the definition of $D_{k,t}$ done in
(\ref{defdd}) with ${\vec Y}'= {\vec Y}^{'(1)}$.
From the equality of events
\begin{eqnarray*}
&{}&
(Y_{1\!-\!a^{-t}}\wedge W\!\in \! B_1, Y_{1\!-\!a^{-t}}
\boxplus{\vec Y}^{'(2)}_{a^{u_2}\!-\!1}\wedge W\!\in \! 
a^{-u_2}B_2,\ldots,\\
&{}& \quad\quad\quad
Y_{1\!-\!a^{-t}}\boxplus_{i=2}^{l}
{\vec Y}^{'(i)}_{a^{u_i}\!-\!a^{u_{i-1}}}
\wedge W\!\in \! a^{-u_l}B_l,  |Y_{1-a^{-t}}\cap W|\!=\!k, D_{k,t})\\
&{}& \; =\!
(Y_{1\!-\!a^{-t}}\boxplus {\vec Y}^{'(1)}_{a^{-t}}\wedge W\!\in \! B_1, 
(Y_{1\!-\!a^{-t}}\boxplus {\vec Y}^{'(1)}_{a^{-t}})
\boxplus {\vec Y}^{'(2)}_{a^{u_2}\!-\!1}
\wedge W\!\in \! a^{-u_l}B_2,\ldots,\\
&{}& \quad\quad
(Y_{1\!-\!a^{-t}}\boxplus {\vec Y}^{'(1)}_{a^{-t}})
\boxplus_{i=2}^{l} {\vec Y}^{'(i)}_{a^{u_i}\!-\!a^{u_{i-1}}}
\wedge W\!\in \! a^{-u_l}B_l, |Y_{1-a^{-t}}\cap W|\!=\!k, D_{k,t})
\end{eqnarray*}
and by using twice (\ref{mar42}) we get
\begin{eqnarray*}
&{}& \lim\limits_{t\to \infty}
\PP(Y[1\!-\!a^{-t},a^{u_2}\!-\!a^{-t},\ldots,a^{u_l}\!-\!a^{-t}]
\wedge W  \!\in \![B_1,a^{-u_2}B_2,..,a^{-u_l}B_l])\\
&{}& \; =\!
\lim\limits_{t\to \infty}\sum\limits_{k\in \NN}
\PP(Y_{1\!-\!a^{-t}}\wedge W\!\in \! B_1, 
Y_{1\!-\!a^{-t}}\boxplus {\vec Y}^{'(2)}_{a^{u_2}\!-\!1} 
\wedge W\!\in \! a^{-u_2}B_2,.., \\
&{}& \quad\quad\quad
Y_{1\!-\!a^{-t}}\boxplus_{i=2}^{l}
{\vec Y}^{'(i)}_{a^{u_i}\!-\!a^{u_{i-1}}}
\wedge W\!\in \! a^{-u_l}B_l, |Y_{1-a^{-t}}\cap W|\!=\!k)\\
&{}& \; =\!
\lim\limits_{t\to \infty}\sum\limits_{k\in \NN}
\PP(Y_{1\!-\!a^{-t}}\boxplus
{\vec Y}^{'(1)}_{a^{-t}}\wedge W\!\in \! B_1,
(Y_{1\!-\!a^{-t}}\boxplus {\vec Y}^{'(1)}_{a^{-t}})\boxplus
{\vec Y}^{'(2)}_{a^{u_2}\!-\!1}\wedge W\!\in \! a^{-u_2}B_2,\\
&{}& \quad\quad\quad
..,(Y_{1\!-\!a^{-t}}\boxplus {\vec Y}^{'(1)}_{a^{-t}})\boxplus_{i=2}^{l}
{\vec Y}^{'(i)}_{a^{u_i}\!-\!a^{u_{i-1}}}
\wedge W\!\in \! a^{-u_l}B_l, |Y_{1-a^{-t}}\cap W|\!=\!k)\\
&{}& \; =\!
\lim\limits_{t\to \infty}\PP(Y_{1\!-\!a^{-t}}\boxplus
{\vec Y}^{'(1)}_{a^{-t}}\wedge W\!\in \! B_1, 
(Y_{1\!-\!a^{-t}}\boxplus {\vec Y}^{'(1)}_{a^{-t}})\boxplus 
{\vec Y}^{'(2)}_{a^{u_2}\!-\!1}\wedge W\!\in \! a^{-u_2}B_2,\\
&{}& \quad\quad\quad
..,(Y_{1\!-\!a^{-t}}\boxplus {\vec Y}^{'(1)}_{a^{-t}})\boxplus_{i=2}^{l}
{\vec Y}^{'(i)}_{a^{u_i}\!-\!a^{u_{i-1}}}
\wedge W\!\in \! a^{-u_l}B_l).
\end{eqnarray*}
 Finally from
\begin{eqnarray*}
&{}& 
\PP(Y_{1\!-\!a^{-t}}\boxplus {\vec Y}^{'(1)}_{a^{-t}}
\wedge W\!\in \! B_1, (Y_{1\!-\!a^{-t}}\boxplus {\vec Y}^{'(1)}_{a^{-t}})
\boxplus {\vec Y}^{'(2)}_{a^{u_2}\!-\!1}\wedge W\!\in \! a^{-u_2}B_2,\\
&{}& \quad\quad
\ldots,(Y_{1\!-\!a^{-t}}\boxplus {\vec Y}^{'(1)}_{a^{-t}})\boxplus_{i=2}^{l}
{\vec Y}^{'(i)}_{a^{u_i}\!-\!a^{u_{i-1}}}
\wedge W\!\in \! a^{-u_l}B_l)\\
&{}& =\PP(Y_1\wedge W\!\in \! B_1, Y_{a^{u_2}}\wedge W \!\in \! a^{-u_2}B_2,
Y_{a^{u_l}}\wedge W\!\in \! a^{-u_l}B_l),
\end{eqnarray*}
the relation (\ref{mar40*}) follows. The proof of Theorem \ref{mix22} is 
complete. 
$\Box$

\subsection{Proof of Theorem \ref{bernoulli}}

\label{Sub2.2}
We will show some intermediate results -some of them 
having their own interest-, that will be needed in 
the proof of the Theorem. 

\medskip

As announced we assume that the interior of the window $W$  
contains the origin $0$. 

\medskip

Let $\ZZ_-=\{n\in \ZZ : n\leq 0 \}$. For a measurable space 
$(\aS, \B(\aS))$ we shall use the one sided 
Bernoulli shift $\sigma_\aS:\aS^\NN\to \aS^\NN$, 
$(\sigma_\aS(x))_n=x_{n+1}$ 
$\, \forall \, n\in \NN$, and the inverse shift 
$$
\sigma^-_\aS:\aS^{\ZZ_-}\to \aS^{\ZZ_-}, \;\;\; 
(\sigma^-_\aS(x))_n=x_{n-1} \,\; \forall \; n\in \ZZ_-\,.
$$
We will set $\sigma^{-n}_\aS:=(\sigma^-_\aS)^n$ for $n\in \NN$. 
For a probability measure $\nu$ on $(\aS, \B(\aS))$, 
both one-sided Bernoulli shifts $(\aS^\NN, \nu^{\otimes \NN},
\sigma_\aS)$ and  $(\aS^{\ZZ_-}, \nu^{\otimes^{\ZZ_-}},
\sigma^-_\aS)$ 
are canonically isomorphic. We recall that the 
Bernoulli shifts are mixing, so ergodic.

\medskip

In the sequel we use the notion of a boundary $\partial T$ of a 
tessellation $T$ which was defined in Section \ref{Sub1.1} as the
union of the boundaries of its cells.

\medskip

Observe that from property (\ref{iterate}) and definition of $\Z$
it follows that
$$
\Z_{n+1}\sim a \Z_n \boxplus a^{n+1}{\vec Y}'_{a^{n+1}-a^n}\, .
$$
Since
$a^{n+1}{\vec Y}'_{a^{n+1}-a^n}=\frac{a}{a-1}(a^{n}(a-1)
{\vec Y}'_{a^{n}(a-1)})$ we get from (\ref{homothet}),  
\begin{equation}
\label{recrel}
\Z_{n+1}\sim a \Z_n  \boxplus \frac{a}{a-1}{\vec Y}'_1\,.
\end{equation}
Let $({\vec Y}^{'(i)}_1: i\ge 0)$ be independent copies of
${\vec Y}^{'}_1$. A simple recurrence on (\ref{recrel}) and 
(\ref{iterate22}) give the formula
\begin{equation}
\label{recrege}
(\Z_{n+k}: k\in \NN)\sim \left(a^k \Z_n  \boxplus_{i=1}^{k} 
\frac{a^{k+1-i}}{a-1}{\vec Y}^{'(i)}_1: k\in \NN\right) 
\,.
\end{equation}
We recall $M\boxplus_{i=1}^{k} {\vec M}^{'(i)}$ is an abbreviation 
for $\left( \ldots \left( M \boxplus  {\vec M}^{'(1)} \right) 
\boxplus \ldots \right) \boxplus  {\vec M}^{'(k)}  $, where $M$ is a 
tessellation and ${\vec M}^{'(i)}$  a sequence of tessellations.

\medskip

The following fact will be useful. We recall that $\xi_W$ is 
the distribution of $Y_1\wedge W$, see (\ref{defm}).

\begin{lemma}
\label{fundadef}
Let $W$ be a window containing the origin $0$ in its interior.

\smallskip

\noindent Let ${\vec R}^-=(R^{k}: k\in \ZZ_-)$ be a random sequence 
of independent copies of $Y_1\wedge W$, that is 
${\vec R}^-\sim \xi_W^{\otimes \ZZ_-}$. 
Then, for $a>1$ we have
\begin{equation}
\label{formula2}
\forall \, k\in \ZZ_-:\;\;\;
\PP(\partial R^k\cap \Int(a^k W)=\emptyset)=
\PP(\partial R^0\cap \Int(W)=\emptyset)^{a^{k}} \,,
\end{equation}
and
\begin{equation}
\label{formula22}
\PP(\forall \, k\in \ZZ_-:\, 
\partial R^{k}\cap \Int(a^{k}W)=\emptyset)
=\PP(\partial R^0\cap \Int(W)=\emptyset)^{\frac{a}{a-1}}>0\,.
\end{equation}

Moreover
\begin{equation}
\label{productinf}
\PP({\vec R}^-\!: \exists (n_i\!\ge\! 1: i\!\in \!\NN)\nearrow,
\forall i\!\in \!\NN \, \forall k\!\in \!\ZZ_- : 
\partial R^{-n_i +k}\cap \Int(a^{k}W)\!=\!\emptyset)=1.
\end{equation}
($\nearrow$ means strictly increasing; so the sequence $(n_i)$
satisfies $\lim\limits_{i\to \infty}n_i=\infty$). 
\end{lemma}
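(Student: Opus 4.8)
The plan is to treat the three displayed formulas in turn, reducing everything to the explicit trivial-tessellation probability (\ref{primero22}), the scaling relation (\ref{homlam}), and the independence of the coordinates $R^k$.

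For (\ref{formula2}) I would first note that since $a>1$ and $k\le 0$ the scaled window satisfies $a^k W\subseteq W$, and that the event $\{\partial R^k\cap \Int(a^k W)=\emptyset\}$ is precisely the event that $R^k$ restricted to $a^k W$ is the trivial tessellation $\{a^k W\}$. As $R^k\sim \xi_W$ is distributed as $Y_1\wedge W$, the consistency of the construction recalled in Subsection \ref{Sub1.2} gives $(Y_1\wedge W)\wedge(a^k W)\sim Y_1\wedge (a^k W)$, so the probability in question equals $\PP(Y_1\wedge(a^k W)=\{a^k W\})=e^{-\Lambda([a^k W])}$ by (\ref{primero22}) applied to the window $a^k W$ at time $1$. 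Then (\ref{homlam}) yields $\Lambda([a^k W])=a^k\Lambda([W])$, and since $\PP(\partial R^0\cap \Int W=\emptyset)=e^{-\Lambda([W])}$, this reads $e^{-a^k\Lambda([W])}=\PP(\partial R^0\cap \Int W=\emptyset)^{a^k}$, which is (\ref{formula2}).

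For (\ref{formula22}) the events $A_k:=\{\partial R^k\cap \Int(a^k W)=\emptyset\}$, $k\in\ZZ_-$, each depend only on the single coordinate $R^k$, and the $R^k$ are independent, hence the $A_k$ are independent. By continuity from above along the decreasing finite intersections $\bigcap_{k=-N}^0 A_k$ I would obtain $\PP(\bigcap_{k\in\ZZ_-}A_k)=\prod_{k\in\ZZ_-}\PP(A_k)$, and inserting (\ref{formula2}) this becomes $\prod_{k\le 0}e^{-a^k\Lambda([W])}=\exp(-\Lambda([W])\sum_{k\le 0}a^k)$. Summing the geometric series $\sum_{k\le 0}a^k=\sum_{j\ge 0}a^{-j}=a/(a-1)$ gives the claimed value $\PP(\partial R^0\cap \Int W=\emptyset)^{a/(a-1)}$, which is strictly positive because $\Lambda([W])<\infty$.

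For (\ref{productinf}) I would set $E_n:=\{\forall k\in\ZZ_-:\ \partial R^{-n+k}\cap \Int(a^k W)=\emptyset\}$ for $n\ge 1$; the event whose probability is to be computed is exactly $\{E_n\text{ holds for infinitely many }n\}=\limsup_n E_n$. As in (\ref{formula22}), the coordinates $R^{-n+k}$ appearing in $E_n$ are distinct and independent and each has law $\xi_W$, so $\PP(E_n)=\prod_{k\le 0}e^{-a^k\Lambda([W])}=p$, the same strictly positive constant for every $n$. The main obstacle is that the $E_n$ are far from independent: $E_n$ depends on all coordinates $(R^j:j\le -n)$, so no naive Borel--Cantelli argument applies. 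I would resolve this in two moves. First, the reverse Fatou inequality for sets gives $\PP(\limsup_n E_n)\ge \limsup_n\PP(E_n)=p>0$. Second, $\limsup_n E_n$ is a tail event for the independent family $(R^j:j\in\ZZ_-)$: since $E_n\in\sigma(R^j:j\le -n)$, for every $N$ one has $\bigcup_{n\ge N}E_n\in\sigma(R^j:j\le -N)$, whence $\limsup_n E_n=\bigcap_N\bigcup_{n\ge N}E_n$ lies in $\bigcap_N\sigma(R^j:j\le -N)$, the Kolmogorov tail $\sigma$-field. By Kolmogorov's zero--one law its probability is $0$ or $1$, and the strict positivity just obtained forces the value $1$, which is (\ref{productinf}).
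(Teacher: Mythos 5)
Your proof is correct. For (\ref{formula2}) and (\ref{formula22}) you follow essentially the same route as the paper: consistency of the STIT construction plus (\ref{primero22}) to identify the trivial-restriction probability $e^{-\Lambda([a^kW])}$, the scaling (\ref{homlam}), independence of the coordinates, and the geometric series $\sum_{k\le 0}a^k=a/(a-1)$. For (\ref{productinf}), however, you take a genuinely different path. The paper views ${\vec R}^-$ as a point of the inverse Bernoulli shift $\bigl(\T_W^{\ZZ_-},\xi_W^{\otimes \ZZ_-},\sigma^-_{\T_W}\bigr)$, applies the Birkhoff ergodic theorem to the indicator of the set $A^*$ of sequences satisfying the condition of (\ref{formula22}), and extracts infinitely many return times $n_i$ from the positive asymptotic frequency $\xi_W^{\otimes\ZZ_-}(A^*)>0$. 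You instead observe that the events $E_n$ (your translates of $A^*$) all have the same probability $p>0$, that reverse Fatou gives $\PP(\limsup_n E_n)\ge p$, and that $\limsup_n E_n$ lies in the tail $\sigma$-field $\bigcap_N\sigma(R^j:j\le -N)$ of the independent family, so Kolmogorov's zero--one law forces probability $1$. Both arguments hinge on the positivity in (\ref{formula22}); yours is more elementary, using only independence and classical measure-theoretic tools rather than ergodicity of Bernoulli shifts, while the paper's yields slightly more (a positive density of valid indices $n_i$, not merely infinitely many) and fits the ergodic-theoretic machinery already set up for the rest of the paper. Your identification of the event in (\ref{productinf}) with $\limsup_n E_n$, and your remark that a naive Borel--Cantelli argument fails because the $E_n$ are strongly dependent, are both accurate.
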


\begin{proof}
The consistency of the STIT tessellations and (\ref{primero22}) yield for 
all windows $W'\subseteq W$
$$
 \PP (\partial Y_1 \cap \Int (W') =\emptyset )= e^{-\Lambda([W'])} >0 .
$$
Hence for all $k\in \ZZ_-$ we use (\ref{homlam}) to get,
$$
\PP(\partial R^k\cap \Int(a^kW) =\emptyset )= e^{-\Lambda([a^k W])} = 
e^{-a^k \Lambda([W])} = \PP(\partial R^0\cap \Int(W) =\emptyset )^{a^k}
$$
which shows (\ref{formula2}).

\medskip

Further, by monotonicity 
\begin{eqnarray*}
&{}& \PP\left(\forall k\in \ZZ_-: 
\partial R^k\cap \Int (a^k W)=\emptyset \right)\\
&{}& \;\;\; =\lim_{m \to -\infty }\PP\left(\forall k\in \{m,...,0\}: 
\partial R^k\cap \Int (a^k W)=\emptyset \right)\\
&{}& \;\;\; =\lim_{m \to -\infty } 
\prod_{k=m}^0 \exp(-a^k \Lambda([W])) \! = \! 
\lim_{m \to -\infty }  \exp\left( - \Lambda([W])\sum_{k=m}^0 a^k\right)\\
&{}& \;\;\; = \exp\left(- \Lambda([W])\frac{a}{a-1}\right) \! = \! 
\PP (\partial R^0 \cap \Int (W) =\emptyset )^{\frac{a}{a-1}} > 0.
\end{eqnarray*}
This proves (\ref{formula22}).

\medskip

Let us show (\ref{productinf}). 
Consider the inverse Bernoulli shift 
$\left(\T_W^{\ZZ_-}, \xi_W^{\otimes {\ZZ_-}}, 
\sigma^-_{\T_W}\right)$ 
with $(\sigma^-(R'))^k=R'^{k-1}$ $\forall \, k\in \ZZ_-$.
Define
$$
A^*=\{{\vec R}'\in \T_W^{\ZZ_-}: 
\; \partial R'^k\cap \Int(a^{k}W)=\emptyset
\; \forall k\in \ZZ_- \}\,.
$$
By (\ref{formula22}) we have $\xi_W^{\otimes {\ZZ_-}}(A^*)>0$. Since 
Bernoulli shifts are ergodic the Birkhoff Ergodic Theorem gives
$$
\lim\limits_{N\to \infty} \frac{1}{N}\left(\sum_{k=0}^{N-1} 
{\bf 1}_{A^*}({\sigma^{-k}_{\T_W}}({\vec R}'))\right)=
\xi_W^{\otimes {\ZZ_-}}(A^*)\!>\!0 \;\;\;\; 
\xi_W^{\otimes {\ZZ_-}}-\hbox{a.e.}\,.
$$
Therefore 
$\xi_W^{\otimes {\ZZ_-}}-$a.e. in ${\vec R}'\in \T_W^{\ZZ_-}$ 
there exists a strictly increasing sequence 
$(n_i\ge 1: i\in \NN)$ such that 
$\{ {\sigma^{-n_i}_{\T_W}}({\vec R}')\in A\}$ for all $i\in \NN$. 
This is exactly (\ref{productinf}) because 
the distribution of ${\vec R}^- =(R^{k}: k\in \ZZ_-)$ is 
$\xi_W^{\otimes {\ZZ_-}}$.
\end{proof}

\medskip

We will also use the following elementary result.

\begin{lemma}
\label{elemental1}
Let $W$ be a window containing the origin $0$ in its interior. 
Let $T^0$ and $R^0$ be two tessellations, 
$(\vec Q_n: n\in \NN)$ be a family of sequences of
tessellations (so for each $n\in \NN$, 
$\vec Q_n=(Q^m_n: m\in \NN)\in \T^\NN$ is a sequence of tessellations).
Define the following sequences of tessellations in $\T_W$:
\begin{equation}
\label{eleq1}
\forall n\!\in \!\NN: \, T^{n+1}\!=
\!(aT^n\boxplus \frac{a}{a\!-\!1}\vec Q_{n+1})\!\wedge  \!W, 
\; R^{n+1}\!=\!(aR^n\boxplus \frac{a}{a\!-\!1}\vec Q_{n+1})
\!\wedge  \! W.
\end{equation}
Then
\begin{equation}
\label{eleq2}
T^0\wedge a^{-n}W= R^0\wedge a^{-n}W \Rightarrow  \;\; 
T^{n}\wedge W=R^{n}\wedge  W\,.
\end{equation}
\end{lemma}

\begin{proof}
By iterating (\ref{eleq1}) we find 
$$
T^{n}\wedge W= a^n (T^0\wedge a^{-n}W)  \boxplus
\left(\boxplus_{i=1}^{n} 
\frac{a^{n+1-i}}{a-1}{\vec Q_{i}}\right)\wedge W\,.
$$
and the result follows straightforward.
\end{proof}

\bigskip

\noindent {\it Proof of Theorem \ref{bernoulli}}. 
The last part of the Theorem (the fact that the factor map
satisfies the finitary property) will be
part of the construction of the factor map.

\medskip

We recall the notation in (\ref{defnu}), $\varrho=\xi_W^{\otimes \NN}$.
For the tessellation $T=\{C(T)^l: l=1,...\}\in \T_W$ 
(the number of cells is finite)
we prescribe $C(T)^1$ to be the cell containing the origin $0$.
For $\vec {R}=(R^m: m\in \NN)\in \T_W^\NN$, the set of cells of
the tessellation $T\boxplus \vec {R}\in \T_W$ 
is 
$$
\{C(T)^i\cap C(R_i)^j: j=1,...; i=1,...; \mbox{ with }
\Int(C(T)^i\cap C(R_i)^j)\neq \emptyset \}.
$$
As noted in Subsection \ref{Sub1.1} for $b>1$ and $T\in \T_W$,
$b \,T \wedge W$ is also 
in $\T_W$. When $\vec R=(R^m: m\in \NN)\in \T_W^\NN$  
we put $b\, {\vec R} \wedge W=(b R^m \wedge W: m\in \NN)$.

\medskip

The factor map $\varphi:(\T_W^\NN)^\ZZ\to \T_W^\ZZ$ which
must satisfy (\ref{propf1}) and (\ref{propf2})
is constructed in an iterative way: we will
define a sequence of functions $(\varphi^{N}: N\ge 0)$ and
will show that the function
$\varphi=\lim\limits_{N\to \infty}\varphi^{N}$ is pointwise
$\, \varrho^{\otimes \ZZ}-$a.e. defined and fulfills
the property of being a factor.
Then, we start by defining $\varphi^{N}$.

\medskip

Let ${\bf R}=({\vec R}_n: n\in \ZZ)\in (\T_W^\NN)^\ZZ$, 
so each ${\vec R}_n=(R_n^m: m\in \NN)$ is a sequence of 
tessellations in the window $W$. We must 
define the image point $\varphi^{N}({\bf R})=(\varphi^{N}_n({\bf R}): n\in 
\ZZ)$ in $\T_W^\ZZ$. We fix (recall $N\ge 0$), 
$$
\forall \, n\le -N:\;\; \varphi^{N}_n({\bf R})=\{W\}\,, 
$$
and we define by recurrence,
\begin{equation}
\label{defN}
\forall \, n\ge -N:\;\; 
\varphi^{N}_{n+1}({\bf R})=\left(a \, \varphi^{N}_n({\bf R}) 
\boxplus \frac{a}{a-1}{\vec R}_n \right) \wedge W\,.
\end{equation}
We claim that $\varphi=\lim\limits_{N\to \infty}\varphi^{N}$ is
defined $\, \varrho^{\otimes \ZZ}-$a.e. In fact, from
property (\ref{productinf}) in Lemma \ref{fundadef}, 
applied to the sequences $(R^1_n: n\in \ZZ_- )$, we get that 
$\varrho^{\otimes \ZZ}-$a.e.
there exists a sequence $N_i\ge 1$, $N_i\to \infty$ (depending on
${\bf R}$) such that 
$\partial R^1_{k-N_i}\cap \Int(a^{k}W)=\emptyset$ for all 
$k\in \ZZ_-$. Hence, from Lemma \ref{elemental1} 
we deduce that for all $N_i$
$$
\forall N\ge N_i \;\,  \forall n\ge -N_i 
:\;\;\; \varphi^{N}_n({\bf R})=\varphi^{N_i}_n({\bf R}).
$$
Therefore 
\begin{equation}
\label{fico22}
\forall N\ge N_i \;\,  \forall n\ge -N_i 
:\;\;\; \varphi_n({\bf R})= \varphi^{N}_n({\bf R})=
\varphi^{N_i}_n({\bf R}),
\end{equation}
that is all the components $\varphi_n({\bf R})$ for $n\ge -N_i$ 
are well-defined as $\varphi^{N_i}_n({\bf R})$. 
Since the sequence
$N_i\ge 1$ exists $\, \varrho^{\otimes \ZZ}-$a.e. the claim 
is verified, so $\varphi$ is defined $\, \varrho^{\otimes \ZZ}-$a.e. 

\medskip

From the definition of $\varphi^{N}$ we have 
$$
\sigma_{\T_W}(\varphi^{N+1}({\bf R}))=
(\varphi^{N}(\sigma_{\T_W^\NN}({\bf R})).
$$
Then $\varphi$ satisfies the commuting property (\ref{propf1}).
The equality (\ref{fico22}) also shows that the factor map
$\varphi$ satisfies the finitary property stated in the Theorem.

\medskip

Let us now turn to the proof of relation (\ref{propf2}). We first note
that since $\lim\limits_{N\to \infty}\PP(N_i\le N)=1$ for all $N_i$,
from the above construction we obtain
\begin{equation}
\label{firsteq}
\forall \epsilon>0 \; \forall k\in \ZZ \; \exists N(\epsilon,k):\;\,
\PP(\forall N\ge N(\epsilon,k) \, \forall n\ge k: \,
\varphi_n^N=\varphi_n)\!>\!1\!-\!\epsilon.
\end{equation}

\medskip

We proved in Theorem \ref{mix22} that $\Z\wedge W$ is mixing, 
then it is ergodic.
Since $\PP(\partial \Z_n \cap \Int(W)=\emptyset)>0$, the ergodic 
theorem applied to the ergodic stationary sequence $\Z\wedge W$ gives
$$
\lim\limits_{N\to \infty}\frac{1}{N}\left(\sum_{i=0}^{N-1}
{\bf 1}_{\{\partial \Z_i \cap \Int(W)=\emptyset\}}\right)=
\PP(\partial \Z_n\cap \Int(W)=\emptyset)\,>\,0 \;\;\, \PP-\hbox{a.e.}\,.
$$
Then,
$$
\PP\left(\exists n_k\ge 0 :\, \lim\limits_{k\to \infty}n_k=\infty,
\partial \Z_{n_k}\cap \Int(W)=\emptyset\right)=1\,.
$$
Hence for all $\epsilon>0$ there exists $K(\epsilon)>0$ such that
\begin{equation}
\label{conserg}
\PP\left(\exists n\in \{0,..., K(\epsilon)\} :
\partial \Z_n\cap \Int(W)=\emptyset \right)>1-\epsilon\,.
\end{equation}

Consider $\Z^d_-=(\Z_n: n\le 0)$.
For each $M\ge 0$ we define the random sequence 
$V^{M}=(V_n^{M}: n\in \ZZ)$ taking values in $\T_W^\ZZ$ by:
$$
\forall \, n\le -M: \;\; V_n^{M}=\Z_{n+M}\wedge W\,,
$$
and by recurrence
\begin{equation}
\label{defVM}
\forall \, n\ge -M: \;\; V_{n+1}^{M}=\left(a \, V_n^{M}
\boxplus \frac{a}{a-1}{\vec R}_n \right)\wedge W\,.
\end{equation}
The sequence $V^{M}$ depends on $\Z^d_-\wedge W$ and ${\bf R}$, 
if we need to explicit its dependence on ${\bf R}$ we put 
$V_n^{M}({\bf R})$. We claim that $V^{M}\sim \Z\wedge W$. To 
show it first note that from the definition of $V^M$ and by the 
time-stationarity of $\Z\wedge W$ we have 
\begin{equation}
\label{inter22}
(V_n^{M}: n\le -M)=(\Z_n\wedge W: n\le 0) \sim 
(\Z_n\wedge W: n\le -M).
\end{equation}

Let us now define the shifted sequence 
$U^{M}({\bf R})=\sigma_{\T_W}^{-M} V^{M}({\bf R})$ that satisfies
$$
\forall \, n\in \ZZ: \; U^{M}_n({\bf R})= V^{M}_{n-M}({\bf R})\,.
$$
We have $(U^{M}_n: n\in \ZZ_-)=\Z^d_-\wedge W$ and by
stationarity $U^{M}({\bf R})\sim \Z\wedge W$. From
(\ref{conserg}) we obtain
$$
\forall \, M>0:\;\; 
\PP\left(\exists n\in \{0,..., K(\epsilon)\} :
U^{M}_n({\bf R})=\{W\}\right)>1-\epsilon\,.
$$
This is equivalent to
\begin{equation}
\label{int1}
\PP\left(\exists n\in \{-M,..., K(\epsilon)-M\} : 
V^{M}_n({\bf R})=\{W\}\right)>1-\epsilon\,.
\end{equation}

In analogy to (\ref{recrege}) we obtain for all $M \ge 0$ and $l\ge 0$
\begin{eqnarray*}
&{}& V^M_{-M+l}({\bf R})= 
\left( (a^l \Z_0 \wedge W) \boxplus_{i=1}^l 
\frac{a^{l+1-i}}{a-1} {\vec R}_{-M+i-1}  \right) \wedge W , \\
&{}& 
\varphi^M_{-M+l}({\bf R})= \left( (a^l W \wedge W) \boxplus_{i=1}^l 
\frac{a^{l+1-i}}{a-1} {\vec R}_{-M+i-1}  \right) \wedge W . 
\end{eqnarray*}
Thus, if   $V^M_{-M+l}({\bf R})=\{W\}$ for some $M,\ l\ge 0$ then 
necessarily $a^l \Z_0 \wedge W =\{W\}$ and hence also 
$\varphi^M_{-M+l}({\bf R})= \{W\}$. So, if 
$V^{M}_n({\bf R})=\{W\}$ for some $n\in \{-M,..., K(\epsilon)-M\}$
the iteration relations (\ref{defN}) and (\ref{defVM}) allow to deduce
$\varphi^{M}_n({\bf R})=V^{M}_n({\bf R})$ for all $n\ge K(\epsilon)-M$.
Therefore we find
\begin{equation}
\label{int2}
\forall \, N\ge K(\epsilon):\;\;\; \PP\left(\forall n\ge K(\epsilon)-N: 
V^N_n({\bf R})=\varphi^N_n({\bf R})\right)>1-\epsilon\,.
\end{equation}

We can now state the proof of (\ref{propf2}).
Let us fix $k\in \ZZ$ and $l\ge 0$, it is sufficient to show that
$$
\forall  B_j\!\in \! \B(\T_W): \, \PP(\varphi_{k+j}({\bf R})
\!\in \! B_j: j\!=\!0,...,l)\!=\!
\PP(\Z_{k+j}\wedge W \!\in \!B_j: j\!=\!0,...,l).
$$
Fix $M\ge 0$. Since $V^M\sim \Z \wedge W$ it suffices to prove that for
all $\delta>0$,
$$
|\PP(\varphi_{k+j}({\bf R})\in B_j: j=0,...,l)-
\PP(V^M_{k+j}({\bf R})\in B_j: j=0,...,l)|\le \delta.
$$
Therefore, it suffices to show that for any $\delta>0$ we have,
$$
\PP\left(\exists j\in \{0,\ldots ,l\}:\{\varphi_{k+j}({\bf R})\in B_j\}
\Delta \{V^M_{k+j}({\bf R})\in B_j\}\right)\le \delta.
$$
%($A\Delta B=(A\setminus B)\cup (B\setminus A)$).
Hence it suffices to prove that for any $\delta>0$ it is satisfied,
\begin{equation}
\label{coup}
\PP(\exists j\in \{0,...,l\}:\;\, \varphi_{k+j}({\bf R})
\neq V_{k+j}({\bf R}))\le \delta.
\end{equation}
To  this purpose let us take $N(\delta/2,k)$ in
(\ref{firsteq}) and $K(\delta/2)$ in (\ref{int2}), to obtain
$$
\PP(\forall N\!\ge \!\max(N(\delta/2,k),K(\delta/2),-k\!+\! K(\delta/2))\, 
\forall n\!\ge\! k: \, V^N_n\!=\!\varphi_n^N\!=\!\varphi_n)\!>\!1\!-\!\delta.
$$
Then, (\ref{coup}) is verified and the proof of 
Theorem \ref{bernoulli} is complete.
\hfill $\Box$

\subsection{Proof of Corollary \ref{orns}}
\label{sub333}
The only relation left to prove is that  
$h(\sigma_{\T_W},\mu^{\Z^d}_W)=\infty$, where 
$h(\sigma_{\T_W},\mu^{\Z^d}_W)$ denotes the entropy of  
$(\T_W^\ZZ, \mu^{\Z^d}_W, \sigma_{\T_W})$.
Recall that $\xi_W$ is the law of $Y_1\wedge 
W=\Z_0\wedge W$. From the Markov property we have 
$$
h(\sigma_{\T_W},\mu^{\Z^d}_W)=\int_{\T_W} 
H(\kappa_T) \,  d\xi_W(T)\,,
$$
where $\kappa_T$ is the law of $\Z_1\wedge W$ conditioned to 
$\Z_0\wedge W =T$. We have $H(\kappa_T)=\infty$ when $\kappa_T$ is not purely 
atomic and $H(\kappa_T)=-\sum_{a\in \A(\kappa_T)} 
\kappa_T(A)\log(\kappa_T(A))$ if $\kappa_T$ is purely atomic and 
$\A(\kappa_T)$ is the set of its atoms. 
So, it suffices to show that 
$$
\xi_W(T\in \T_W: \kappa_T \hbox{ has a non-atomic part })>0\,.
$$
We will show the stronger property: $\kappa_T$ has a non-atomic part
$\xi_W-$a.e.. 
First note that $\kappa_T$ has an atom at $\{aT\wedge W\}$: 
$\kappa_T(\{aT\wedge W\})>0$. This is a consequence of the following facts: 
if $Y_a\wedge  W=Y_1\wedge W$ then $\Z_1\wedge W=aY_a\wedge W=
aY_1\wedge W=a\Z_0\wedge W$; and the construction of the 
process yields that $\PP (Y_a\wedge W = Y_1 \wedge W )>0$.
Also from the construction of the process $Y$ it follows that 
$\kappa_T(\{aT\wedge W\})<1$.

\medskip

Assume that $\kappa_T$ has an atom $T^0\in \T_W$ 
different from the atom $\{aT\wedge W\}$. From the construction there is an 
hyperface
$r$ such that $aT\cup r\subseteq T^0$ and $r\subset H\in \hH$, that is 
$r$ is a part of an hyperplane $H$. The translation invariance 
and $\sigma-$finiteness of the hyperplane measure $\Lambda$ implies
that $\Lambda_W(\{H\})=0$ for all $H\in \hH$. Consequently, the 
hyperface $r$ in $T^0$ appears in the construction with probability $0$.  
We conclude that $\{aT\wedge W\}$ is the unique atom of $\kappa_T$. 
Since $\kappa_T(\{aT\wedge W\})<1$, $\kappa_T$ has a non-atomic part and so
$H(\kappa_T)=\infty$ for all $T\in \T_W$. We conclude  
$h(\sigma_{\T_W},\mu^{\Z^d}_W)=\infty$. $\Box$

\bigskip

\noindent {\bf Acknowledgments} Both authors are indebted for 
the support of Program Basal CMM from CONICYT and W.N. thanks for
DAAD support.

\bigskip

\noindent SERVET MART\'INEZ

\noindent {\it Departamento Ingenier{\'\i}a Matem\'atica and Centro
Modelamiento Matem\'atico, Universidad de Chile,
UMI 2807 CNRS, Casilla 170-3, Correo 3, Santiago, Chile.}
e-mail: smartine@dim.uchile.cl

\noindent WERNER NAGEL

\noindent {\it Friedrich-Schiller-Universitat Jena, Fakultat fur 
Mathematik und Informatik, D-07737 Jena, Germany.}
e-mail: werner.nagel@uni-jena.de

\end{document}